\newtheorem{thm}{Theorem}[section]
\newtheorem{prop}[thm]{Proposition}
\newtheorem{lem}[thm]{Lemma}
\newtheorem{cor}[thm]{Corollary}
\newtheorem{claim}[thm]{Claim}
\theoremstyle{definition}
\newtheorem{definition}[thm]{Definition}
\newtheorem{example}[thm]{Example}
\theoremstyle{remark}
\newtheorem{remark}[thm]{Remark}
\numberwithin{equation}{section}
\newcommand{\bQ}{\mathbb{Q}}
\newcommand{\bR}{\mathbb{R}}
\newcommand{\bP}{\mathbb{P}}
\newcommand\OO{{\mathcal{O}}}
\newcommand{\mult}{\operatorname{mult}}
\newcommand{\Vol}{\operatorname{Vol}}
\begin{document}

\title{An effective upper bound for anti-canonical volumes of singular Fano threefolds}
\date{\today}
\author{Chen Jiang}
\address{Chen Jiang, Shanghai Center for Mathematical Sciences, Fudan University, Jiangwan Campus, Shanghai, 200438, China}
\email{chenjiang@fudan.edu.cn}

\author{Yu Zou}
\address{Yu Zou, Yau Mathematical Sciences Center, Tsinghua University, Beijing, 100084, China}
\email{fishlinazy@tsinghua.edu.cn}


\begin{abstract}
For a real number $0<\epsilon<1/3$, we show that the anti-canonical volume of an
$\epsilon$-klt Fano $3$-fold is at most $3200/\epsilon^4$ and the order $O(1/\epsilon^4)$ is sharp.
\end{abstract}

\keywords{Fano threefolds, anti-canonical volumes, log canonical thresholds, boundedness}
\subjclass[2020]{14J45, 14J30, 14J17}
\maketitle
\pagestyle{myheadings} \markboth{\hfill C.~Jiang \& Y.~Zou
\hfill}{\hfill An effective upper bound for anti-canonical volumes of singular Fano $3$-folds\hfill}

\tableofcontents

\section{Introduction}
Throughout this paper, we work over the field of complex numbers $\mathbb{C}$.

A normal projective variety $X$ is a {\it Fano} variety if $-K_X$ is ample. According to the minimal model program, Fano varieties form a fundamental class in the birational classification of algebraic varieties. 

One recent breakthrough in birational geometry is the proof of the Borisov--Alexeev--Borisov conjecture by Birkar \cite{Bir19, Bir21}, which states that for a fixed positive integer $d$ and a positive real number $\epsilon$, the set of $d$-dimensional Fano varieties with $\epsilon$-klt singularities forms a bounded family. During the proof, one important step is to establish the upper bound for the anti-canonical volume $(-K_X)^d$ for an $\epsilon$-klt Fano variety $X$ of dimension $d$ (\cite[Theorem~1.6]{Bir19}).

Motivated by the classification theory of $3$-folds, we mainly focus on the anti-canonical volume $(-K_X)^3$ for an $\epsilon$-klt Fano $3$-fold $X$.
In this direction, 
Lai \cite{Lai16} gave an upper bound for those $X$ which are $\mathbb{Q}$-factorial and of Picard rank $1$, which is over  $O((\frac{4}{\epsilon})^{384/\epsilon^5})$; later, the first author \cite{Jiang21} showed the existence of a non-explicit upper bound; recently, Birkar \cite{Bir22} gave the first explicit upper bound, which is about $O(\frac{2^{1536/\epsilon^3}}{\epsilon^9})$.

The main goal of this paper is to provide a reasonably small explicit upper bound with a sharp order, for the anti-canonical volume of an $\epsilon$-klt Fano $3$-fold.
Here we state the result for a larger class of varieties containing $\epsilon$-klt Fano $3$-folds. Recall that a normal projective variety $X$ is said to be {\it of $\epsilon$-Fano type} if there exists an effective $\mathbb{R}$-divisor $B$ such that $(X, B)$ is $\epsilon$-klt and $-(K_X+B)$ is ample.

\begin{thm}\label{mainthm}
Fix a real number $0<\epsilon<\frac{1}{3}$. Let $X$ be a $3$-fold of $\epsilon$-Fano type. Then $$\Vol(X, -K_{X})< \frac{3200}{\epsilon^4}.$$
\end{thm}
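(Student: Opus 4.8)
The plan is to follow the now-standard strategy for bounding anti-canonical volumes of Fano varieties, but to track constants carefully at every step in order to get the clean bound $3200/\epsilon^4$ with the sharp order. First I would reduce to the case where $X$ itself is an $\epsilon$-lc Fano $3$-fold: given $(X,B)$ as in the definition of $\epsilon$-lc Fano type, run a minimal model program on $K_X$ (equivalently, use that $X$ has klt type and is rationally connected, so $-K_X$ is big once $-(K_X+B)$ is ample and $B\geq 0$), replacing $X$ by a birational model with $-K_X$ nef and big, which only increases $\Vol(X,-K_X)$; alternatively pass to a $\mathbb{Q}$-factorialization and an MMP so that it suffices to bound the volume on a genuine $\epsilon$-lc Fano. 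So assume $-K_X$ is nef and big (or ample) on an $\epsilon$-lc $3$-fold $X$.

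The core of the argument is a bootstrapping estimate relating $\Vol(X,-K_X)$ to the anti-canonical volumes of lower-dimensional subvarieties cut out by anti-canonical divisors, together with an induction coming from the boundedness of the ``volume versus singularity'' invariant. Concretely, one chooses a general member, or a suitable multiple, in $|-mK_X|$ for appropriate $m$; when $\Vol(X,-K_X)$ is large one can, after perturbation, produce a divisor $D\sim_{\mathbb{R}} -\lambda K_X$ with $\lambda$ small and with a non-klt (indeed non-lc beyond the $\epsilon$ threshold) center of small dimension, using the standard estimate that on a variety of dimension $d$ with $\Vol$ large one finds such $D$ with $\lambda \approx d/\Vol^{1/d}$. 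Tie-breaking and adjunction (inversion of adjunction, in the $\epsilon$-lc refined form) then cut down the non-klt center; the $\epsilon$-lc hypothesis forces $\lambda > \text{const}\cdot\epsilon$ at each stage, which after chasing through the dimension drop $3\to 2\to 1$ gives an inequality of the shape $\Vol(X,-K_X)^{1/3}\lesssim 1/\epsilon^{4/3}$, i.e. $\Vol \lesssim 1/\epsilon^4$. The sharp power $\epsilon^{-4}$ (rather than the towers in earlier work) comes precisely from doing the induction linearly in the dimension and not iterating boundedness statements; the constant $3200$ is then extracted by being explicit in the surface and curve base cases (where one uses, e.g., that an $\epsilon$-lc del Pezzo surface has bounded anti-canonical volume with an explicit bound like $O(1/\epsilon^2)$, and on curves everything is elementary).

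In the body of the paper (before this statement) there should be supporting lemmas: an explicit bound for $\epsilon$-lc weak del Pezzo surfaces, a lemma producing divisors with prescribed log canonical threshold controlled by the volume, an explicit inversion-of-adjunction / different estimate, and possibly a statement comparing $\Vol(X,-K_X)$ with $\Vol(S,-K_S)$ for $S$ a general element of $|-K_X|$ (or of a bounded multiple). I would assemble the proof as: (1) reduce to $X$ Fano $\epsilon$-lc with $-K_X$ nef and big; (2) if $\Vol(X,-K_X)\geq 3200/\epsilon^4$, derive a contradiction by constructing, via the volume-to-lct lemma, a boundary $\Delta\sim_{\mathbb{R}}-K_X$ (or a small multiple) whose log canonical threshold is too small; (3) use tie-breaking to make the non-klt locus a single subvariety $W$; (4) apply adjunction to restrict to $W$ and feed in the surface/curve base cases with their explicit constants, obtaining $\epsilon < $ (something forced to be $\leq\epsilon$), the desired contradiction.

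The main obstacle I anticipate is controlling constants through the adjunction/tie-breaking steps: inversion of adjunction in the $\epsilon$-lc setting introduces a correction from the different, and keeping that correction from blowing up the constant (or the exponent) is delicate — this is exactly where earlier arguments lost control and produced tower-type bounds. The key technical point to get right is that each dimension-drop costs only a factor linear in $1/\epsilon$ (so three drops give $1/\epsilon^3$ times a base volume bound of order $1/\epsilon$, or similar bookkeeping totalling $1/\epsilon^4$), and that all the numerical losses can be absorbed into the single constant $3200$; making the surface case bound fully explicit and sharp enough is the concrete computational heart of the matter.
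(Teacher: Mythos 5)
Your proposal is, in outline, Birkar's general strategy (create a divisor with small lct, tie-break to a minimal non-klt center, apply adjunction, induct on dimension). This is a genuinely different route from the paper, and — more importantly — it will not produce the stated bound $3200/\epsilon^4$. The paper itself notes (Remark~\ref{rem:compareBir22}) that carrying out exactly your plan, even with the improved surface lct estimate, yields an upper bound of order $O(1/\epsilon^{12})$, not $O(1/\epsilon^4)$. The reason is that your bookkeeping step ``each dimension drop costs only a factor linear in $1/\epsilon$'' is incorrect: the lct of an effective $\mathbb{R}$-divisor $D\sim_{\mathbb{R}} -K_S$ with respect to an $\epsilon$-lc boundary on a surface $S$ with $-K_S$ big is only bounded below by a quantity of order $\epsilon^3$ (Theorem~\ref{thm lct1}), and this order is sharp (Remark~\ref{rem:ambroex}). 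So a single adjunction drop from threefold to surface already costs $\epsilon^{-3}$, not $\epsilon^{-1}$, and iterating your scheme blows up the exponent.

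The paper avoids this loss by not iterating adjunction at all. Instead, after reducing to a $\mathbb{Q}$-factorial terminal model $W$ with a fibration $f\colon W\to Z$ (Proposition~\ref{prop MFS}, a refinement of the Mori-fiber-space reduction), it splits into three cases by $\dim Z\in\{0,1,2\}$. When $\dim Z=0$, Prokhorov's bound $(-K_W)^3\leq 64$ applies. When $\dim Z=1$, the single surface lct estimate $\mu(2,\epsilon)>3\epsilon^3/400$ combined with a coefficient bound $M(2,\epsilon)<4/\epsilon$ gives $\Vol(W,-K_W)\leq 6M(2,\epsilon)/\mu(2,\epsilon) < 3200/\epsilon^4$. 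When $\dim Z=2$, one uses the classification of Du~Val del Pezzo surfaces of Picard rank one to find a free linear system with $\mathcal{H}^2\leq 6$ on $Z$, which gives a bound of only $O(1/\epsilon^2)$. So the $\epsilon^{-4}$ order comes from exactly one adjunction to the surface base case, not from three nested adjunctions — this is the structural point your proposal misses.

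A secondary gap: your first reduction (``run an MMP on $K_X$ so that $-K_X$ is nef and big, which only increases $\Vol$'') does not preserve the $\epsilon$-lc condition and does not by itself provide a useful geometric structure. What is actually needed, and what the paper does via \cite[Theorem~4.1]{Jiang21} and \cite[Proposition~4.1]{JZ21}, is a birational modification to a $\mathbb{Q}$-factorial terminal model of $\epsilon$-lc Fano type admitting a Mori fiber structure, with the additional control (in the $\dim Z=2$ case) that the base can be replaced by a Du~Val del Pezzo of Picard rank one or $\mathbb{P}^1$. Without this reduction, the explicit case analysis that yields the constant $3200$ is unavailable.
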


The following example shows that the
order $O(\frac{1}{\epsilon^4})$ in Theorem~\ref{mainthm} is sharp.

\begin{example}
 Ambro \cite[Example~6.3]{Ambro} showed that for each positive integer $q$, there exists a projective toric $3$-fold pair $(X, B)$ such that
\begin{itemize}
\item $(X, B)$ is $\frac{1}{q}$-lc,
 \item $-(K_X+B)$ is ample (in fact, $-K_X$ is ample as $\rho(X)=1$), and
 \item $(-K_X)^3> (-(K_X+B))^3=\frac{u_{4, q}}{q^4}=O(q^4)$.
\end{itemize} 
\end{example}

\begin{remark}\label{rem:compareJiang21}
 The ideas of this paper originate from \cite{Jiang18, Jiang21}, which we briefly explain in the following. Given a $3$-fold $X$ of $\epsilon$-Fano type, in order to give an upper bound of the anti-canonical volume of $X$, we may reduce to the case that $X$ admits a Mori fiber structure $X\to T$. Then we can split the discussion according to $\dim T\in \{0,1,2\}$. The case $\dim T=0$ was solved in \cite{Prok07} (or \cite{Lai16}) and other 2 cases were solved in 
 \cite{Jiang21}. The main obstructions of getting a reasonably small explicit upper bound in \cite{Jiang21} are the following two issues:
 \begin{enumerate}
 \item when $\dim T=1$, we reduce the upper bound problem to finding a lower bound of certain log canonical thresholds on surfaces, called $\mu(2, \epsilon)$, but the lower bound in \cite{Jiang21} is extremely small so that the resulting upper bound is extremely large;
 
\item when $\dim T=2$, we reduce the upper bound problem to the boundedness of such surfaces $T$ (more precisely, the existence of very ample divisors on $T$ with bounded self-intersection numbers), but the geometry of those $T$ is quite complicated which makes the upper bound non-explicit; in fact, even if we can classify those surfaces $T$, then the resulting upper bound will be explicitly computable but still extremely large.
 \end{enumerate}

This paper is devoted to solving these two issues and we put two main ingredients into the recipe. 

The first one is a new reduction originate from \cite{JZ21} (Proposition~\ref{prop MFS}), which
shows that we may further assume that $X$ admits a better fibration structure $X\to S$ so that if $\dim S=2$, then there exists a free divisor on $S$ with small self-intersection number. This solves the second issue.

The second one is a more detailed estimate on the lower bound $\mu(2, \epsilon)$ (Theorem~\ref{thm lct1}), which solves the first issue. We significantly improve the lower bound from about $O(2^{-64/\epsilon^3})$ in \cite{Jiang21} to $O(\epsilon^3)$ (Theorem~\ref{thm lct1}).
\end{remark}

\begin{remark}\label{rem:compareBir22}
We shall also compare our result with \cite{Bir22}. 
The method used in \cite[Theorem~1.2]{Bir22} is a slight modification of \cite[Theorem~1.6]{Bir19}, which is different from our method (but it is a more general strategy that works in any dimension). One thing we share in common is that, \cite[Theorem~1.2]{Bir22} also reduces the problem to finding some kind of lower bound of log canonical thresholds on surfaces (\cite[Lemma~2.2]{Bir22}).
Here note that the constant $\mu(2, \epsilon)$ in \cite[Lemma~2.2]{Bir22} is 
$\frac{1}{\mu(2, \epsilon)}$ in our terminology.
If we replace \cite[Lemma~2.2]{Bir22} by Theorem~\ref{thm lct1} in the proof of \cite[Theorem~1.2]{Bir22}, then we get an explicit upper bound about $O(\frac{1}{\epsilon^{12}})$. 
\end{remark}

\section{Preliminaries}\label{sec 2}
We adopt standard notation and definitions in \cite{KM} and will freely use them.
We use $\sim_\bQ, \sim_\bR, \equiv$ to denote $\bQ$-linear equivalence, $\bR$-linear equivalence, and numerical equivalence respectively.

\subsection{Singularities of pairs}
\begin{definition}
A {\it pair} $(X, B)$ consists of a normal variety $X$ and an effective
$\bR$-divisor $B$ on $X$ such that
$K_X+B$ is $\bR$-Cartier.
\end{definition}

\begin{definition}\label{def sing}
Let $(X, B)$ be a pair. Let $f: Y\to X$ be a log
resolution of $(X, B)$, write
$$
K_Y =f^*(K_X+B)+\sum a_iE_i,
$$
where $E_i$ are distinct prime divisors on $Y$ satisfying $f_*(\sum a_iE_i)=-B$. The number $a_i+1$ is called the {\it log discrepancy} of $E_i$ with respect to $(X, B)$, and is denoted by $a(E_i, X, B)$. 
The pair $(X,B)$ is called
\begin{enumerate} 
\item \emph{Kawamata log terminal} ({\it klt},
for short) if $a_i+1>0$ for all $i$;

\item \emph{log canonical} (\emph{lc}, for
short) if $a_i+1\geq 0$ for all $i$;

\item \emph{$\epsilon$-klt}, if $a_i+1> \epsilon$ for all $i$, for some $0<\epsilon<1$;

\item \emph{$\epsilon$-lc}, if $a_i+1\geq \epsilon$ for all $i$, for some $0<\epsilon<1$;

\item \emph{terminal} if $a_i> 0$ for all $f$-exceptional divisors $E_i$ and for all $f$.

\end{enumerate}
Usually, we write $X$ instead of $(X,0)$ in the case when $B=0$.

\end{definition}

\subsection{Varieties of Fano type}

\begin{definition}
 A variety $X$ is said to be 
 {\it of $\epsilon$-Fano type} 
 if $X$ is projective and there exists an effective $\mathbb{R}$-divisor $B$ such that $-(K_X+B)$ is ample and $(X, B)$ is 
 $\epsilon$-klt for some $0<\epsilon<1$.
\end{definition}


\subsection{Volumes}
\begin{definition}
Let $X$ be a normal projective variety of dimension $n$ and let $D$ be a Cartier divisor on $X$. The {\it volume} of $D$ is defined by
$$
\Vol(X, D)=\limsup_{m\to \infty}\frac{h^0(X, \OO_X(mD))}{m^n/n!}.
$$
Moreover, by homogeneous property of volumes, the definition can be extended to $\bQ$-Cartier $\bQ$-divisors. Note that if $D$ is a nef $\bQ$-Cartier $\bQ$-divisor, then $\Vol(X, D)=D^n$. We refer to \cite[2.2.C]{Positivity1} for more details and properties on volumes of divisors. 
\end{definition}

\section{A lower bound of log canonical thresholds on surfaces}\label{sec 3}

The main goal of this section is to prove the following theorem on certain log canonical thresholds on surfaces. This is the main ingredient of this paper and the most technical part.

\begin{thm}\label{thm lct1}
Fix $0<\epsilon<\frac{1}{3}$.
Let $S$ be a smooth projective surface. Suppose that there exists a real number $0<t<1$ and effective $\bR$-divisors $B, D$ on $S$ such that 
\begin{itemize}
 \item $(S, B)$ is $\epsilon$-lc;
 \item $(S, (1-t)B+tD)$ is not klt;
 \item $B\sim_\bR D \sim_\bR -K_S$ and $-K_S$ is big. 
\end{itemize}
Then $t>\frac{3\epsilon^3}{400}.$
\end{thm}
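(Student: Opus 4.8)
The plan is to reduce the statement to a local problem at a point where the non-klt condition is witnessed, and then to run a careful volume/multiplicity estimate. Concretely, since $(S,(1-t)B+tD)$ is not klt, there is a point $p\in S$ (we may assume it is a closed point, and after blowing up, a smooth point) and a prime divisor $E$ over $S$ with center $p$ such that $a(E,S,(1-t)B+tD)\le 0$. Because $(S,B)$ is $\epsilon$-lc, the divisor $(1-t)B+tD$ cannot be ``too singular'' coming from the $B$ part, so the failure of klt must be driven by $tD$; this forces a lower bound on the local multiplicity (or more precisely on some invariant like $\operatorname{lct}_p$ or the Seshadri-type constant) of $D$ at $p$ in terms of $t$. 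The first step is therefore to make precise the inequality: roughly, $\operatorname{lct}_p(S,D)\le \frac{1}{t}\cdot(\text{something close to }1)$, using that $B$ contributes log discrepancy at least $\epsilon$.

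Next I would exploit that $D\sim_\bR -K_S$ and $-K_S$ is big to control $D^2=(-K_S)^2=\Vol(S,-K_S)$ from above. The point is that a divisor which is very singular at $p$ (small log canonical threshold at $p$) but is $\bR$-linearly equivalent to $-K_S$ cannot have too small self-intersection: combining a lower bound for $\mult_p D$ with the bigness of $-K_S$, or better, applying the standard inequality relating $\operatorname{lct}$, multiplicity, and $D^2$ on a surface (for a curve $C$ with a singular point of multiplicity $m$ one has $\operatorname{lct}_p\ge \frac{2}{m}$ roughly, and $C^2\ge m^2$ type bounds), should yield that $(-K_S)^2$ is bounded below by a positive power of $\frac{1}{t}$. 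On the other hand, from $(S,B)$ being $\epsilon$-lc Fano type with $-K_S$ big, one has an \emph{upper} bound for $(-K_S)^2$ — here I would invoke the surface analogue, namely that an $\epsilon$-lc Fano-type surface has anti-canonical volume at most something like $\frac{C}{\epsilon}$ or a similar explicit bound (this is classical, and presumably available from the references or provable directly via the two-dimensional BAB with explicit constants). Matching the lower and upper bounds for $(-K_S)^2$ then produces an inequality of the form $\frac{1}{t}\lesssim \frac{C}{\epsilon}$ up to the relevant powers, which rearranges to $t\gtrsim \epsilon^3$, and chasing the explicit constants should give the stated $t>\frac{3\epsilon^3}{400}$.

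I expect the main obstacle to be getting the \emph{explicit} constants sharp enough, rather than the qualitative shape of the argument. There are two delicate points. First, the local-to-global passage: one must choose the divisor $E$ (or the valuation) optimally and track how the $\epsilon$-lc hypothesis on $B$ degrades the naive estimate — in particular one likely needs to pass to a minimal resolution or run a few carefully chosen blow-ups and keep the discrepancies bookkept with explicit numbers, which is where the constant $400$ and the exponent $3$ get pinned down. Second, one needs a genuinely explicit upper bound for anti-canonical volumes of $\epsilon$-lc Fano-type surfaces with good constants; if the cleanest available bound is too lossy, I would instead argue directly on $S$ using the structure of the minimal model program for surfaces (running $-K_S$-MMP is not available, but one can take a suitable resolution and use that $K_S+B\equiv$ something anti-ample to bound intersection numbers), which is more work but gives full control of constants. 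A further subtlety is that $B$ and $D$ are only $\bR$-divisors and only $\bR$-linearly equivalent to $-K_S$; I would handle this by a standard perturbation/limiting argument, replacing them by $\bQ$-divisors at the cost of an arbitrarily small loss, which does not affect the strict inequality.
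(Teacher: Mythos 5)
Your starting move --- localize the non-klt condition to a valuation over $S$ and use the $\epsilon$-lc hypothesis on $B$ to force $D$ to be very singular there --- does match the paper in spirit, but the central mechanism of the proposal then breaks. You claim that a $D\sim_\bR -K_S$ which is very singular at $p$ forces $(-K_S)^2=D^2$ to be large, invoking ``$C^2\ge m^2$ type bounds''. But $D^2$ is determined by the numerical class alone and carries no information about the singularities of $D$; the genus-formula inequality $p_a(C)\ge\binom{m}{2}$ behind ``$C^2\ge m^2$'' is only valid for an \emph{irreducible} curve $C$, whereas here $D=\sum c_iC_i$ is an arbitrary effective $\bR$-divisor and no such constraint holds. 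For a concrete illustration, on $S=\mathbb{F}_n$ the divisor $D=2\sigma+(n+2)F\sim -K_S$ satisfies $\mult_p D=n+4$ at $p=\sigma\cap F$, while $(-K_S)^2=8$ is constant; so the step ``matching lower and upper bounds for $(-K_S)^2$'' does not produce any inequality at all. Moreover the multiplicity at the closed point $p$ does not even encode the failure of klt: $\operatorname{lct}_p\le 1$ only forces $\mult_p\big((1-t)B+tD\big)\ge 1$, so no bound of the shape $\mult_p D\gtrsim 1/t$ follows from the hypotheses --- the relevant information lives on deeper exceptional valuations.

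What the paper actually does, and what is missing from the proposal, has three ingredients. First, a reduction (via \cite{Jiang13}) to $S=\mathbb{P}^2$ or $\mathbb{F}_n$ with $n\le 2/\epsilon$; this is where the coefficient bound $\mult_E D\le n+4$ for $E$ a component of $D$ really comes from, not from $D^2$. Second, a $\delta$-lc threshold trick with $\delta=\epsilon/2$: one picks the valuation $E$ at which $a\big(E,S,(1-t_0)B+t_0D\big)=\delta$ exactly, so that after extracting $E$ by \cite{BCHM} to $f\colon Y\to S$, the pair $(Y,(1-\delta)E)$ is $\delta$-lc; this controls the ``depth'' of $E$ in a way the raw non-klt condition never could. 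Third --- and this is where the exponent $3$ is actually pinned down --- one runs a $(-E)$-MMP on $Y$ and bounds $\mult_E f^*D$ by $O(1/\delta^2)$ through a detailed count of exceptional curves on the minimal resolutions of $Y$ and of the resulting Mori fiber space, using combinatorics of weighted dual graphs (Lemmas~\ref{lem:LD}, \ref{lem:chain}, \ref{lem:multEC} and Claim~\ref{claim:rho(Ymin)}). The power $\epsilon^3$ arises precisely from the one factor of $\epsilon$ contributed by the $\epsilon$-lc hypothesis on $B$, combined with $\delta^2\sim\epsilon^2$ from this multiplicity bound; the easy case where $E$ is already a divisor on $S$ yields only $t\gtrsim\epsilon^2$, so the exceptional case carries the theorem. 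Ingredients two and three are the real content, and they are absent from the proposal; the fallback you sketch (``take a suitable resolution and bound intersection numbers'') is too vague to tell whether it would supply them. As written there is a genuine gap at the passage from ``$D$ is singular at $p$'' to a numerical contradiction.
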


As an immediate corollary, we confirm the generalized Ambro's conjecture (\cite[Conjecture~2.7]{Jiang21}) in dimension $2$ with a greatly improved lower bound.
\begin{cor}[{cf. \cite[Theorem~2.8]{Jiang21}}]\label{cor:genambro}
The generalized Ambro's conjecture (\cite[Conjecture~2.7]{Jiang21}) holds in dimension $2$ with $\mu(2, \epsilon)>\frac{3\epsilon^3}{400}$.
\end{cor}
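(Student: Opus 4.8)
The plan is to reduce the corollary to Theorem~\ref{thm lct1} essentially by unwinding the definition of the quantity $\mu(2,\epsilon)$ in the generalized Ambro's conjecture. Recall that (in the formulation of \cite[Conjecture~2.7]{Jiang21}) $\mu(n,\epsilon)$ is defined as the infimum over all $\epsilon$-lc log Fano type configurations of the relevant log canonical threshold constant; in dimension $2$ the data entering this infimum is precisely a smooth (or $\epsilon$-lc) surface $S$, a boundary $B$ making $(S,B)$ $\epsilon$-lc, and a second effective divisor $D$, all three in the appropriate numerical class proportional to $-K_S$. So the first step is simply to match the hypotheses of Theorem~\ref{thm lct1} with the quantity being bounded: given any such configuration realizing a value $t$ of the relevant threshold (so that $(S,(1-t)B+tD)$ fails to be klt while $t$ is as small as possible), Theorem~\ref{thm lct1} yields $t>\frac{3\epsilon^3}{400}$, and taking the infimum gives $\mu(2,\epsilon)\geq \frac{3\epsilon^3}{400}$.

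The one technical point to address carefully is that Theorem~\ref{thm lct1} is stated for a \emph{smooth} projective surface $S$, whereas the conjecture may a priori be phrased for $\epsilon$-lc (possibly singular) surfaces of Fano type. The standard device here is to pass to a minimal resolution $\pi\colon S'\to S$: since $(S,B)$ is $\epsilon$-lc, pulling back $K_S+B$ gives $K_{S'}+B'=\pi^*(K_S+B)$ with $B'$ effective (because the minimal resolution of a surface has no discrepancy-decreasing exceptional divisors over an lc surface, and $\epsilon$-lc-ness is preserved), and similarly one pulls back $D$; numerical proportionality to $-K_{S'}$ is retained up to adding the (effective, $\pi$-exceptional) correction, and bigness of $-K_{S'}$ follows from bigness of $-K_S$. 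The non-klt locus and the threshold $t$ are preserved under this crepant pullback, so one is reduced exactly to the smooth hypothesis of Theorem~\ref{thm lct1}. (If the intended formulation of the conjecture in dimension $2$ already assumes $S$ smooth, this paragraph is unnecessary and the corollary is immediate.)

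I do not anticipate any genuine obstacle here: the corollary is a bookkeeping consequence once Theorem~\ref{thm lct1} is in hand, and the only thing requiring attention is the translation between the $\mu(2,\epsilon)$ bookkeeping of \cite{Jiang21} (where, as noted in Remark~\ref{rem:compareBir22}, conventions differ between sources by an inversion) and the explicit inequality $t>\frac{3\epsilon^3}{400}$. The ``hard part'' of the whole package is entirely contained in the proof of Theorem~\ref{thm lct1} itself, which the corollary simply quotes; the corollary proper is a one-line deduction plus, if needed, the resolution argument above.
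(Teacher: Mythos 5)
Your proposal is correct and essentially matches what the paper does: the paper simply states that ``the reduction from Corollary~\ref{cor:genambro} to Theorem~\ref{thm lct1} is standard'' and cites \cite[\S5, Page~1583]{Jiang21}, without spelling out the argument. Your account of the reduction is the right one, and is in fact mirrored (in the converse direction) by Remark~\ref{rem:ambroex}, where the paper itself converts a configuration $(X,\Delta,H)$ as in \cite[Conjecture~2.7]{Jiang21} into the data $(S,B,D)$ of Theorem~\ref{thm lct1} by passing to a minimal resolution. The one ingredient your sketch under-emphasizes, which Remark~\ref{rem:ambroex} makes explicit, is that one must also add a sufficiently general effective $A\sim_{\bR}-(K_X+\Delta)$ to the boundary before pulling back, so that $K_S+B=\pi^*(K_X+\Delta+A)\sim_{\bR}0$ holds and the divisor $D$ built from $B'+\pi^*H$ satisfies $B\sim_{\bR}D\sim_{\bR}-K_S$; the minimal resolution alone does not make $B\sim_{\bR}-K_S$ when $-(K_X+\Delta)\not\sim_{\bR}0$. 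This is a routine device and does not affect the correctness of your outline; it is part of the ``standard'' reduction the paper is invoking.
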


The reduction from Corollary~\ref{cor:genambro} to Theorem~\ref{thm lct1} is standard (see \cite[\S5, Page~1583]{Jiang21}).

\begin{remark}\label{rem:ambroex}
While the constant term might be improved slightly, the order $\epsilon^3$ in Theorem~\ref{thm lct1} is sharp.
In fact, Ambro \cite[Theorem~1.1, Example~6.3]{Ambro} showed that for each positive integer $q$, there exists a projective toric surface pair $(X, \Delta)$ such that
\begin{itemize}
\item $(X, \Delta)$ is $\frac{1}{q}$-lc,
 \item $-(K_X+\Delta)$ is ample, and
 \item there exists an effective $\mathbb{Q}$-divisor $H\sim_\mathbb{Q} -(K_X+\Delta)$ such that $(X, \Delta+tH)$ is not klt for $t=\frac{1}{(q+1)(q^2+q+1)}=O(\frac{1}{q^3})$.
\end{itemize} 
We can modify this example to satisfy assumptions in Theorem~\ref{thm lct1}.
Take $A\sim_\mathbb{Q} -(K_X+\Delta)$ to be a sufficiently general ample effective $\mathbb{Q}$-divisor such that $(X, \Delta+A)$ is still $\frac{1}{q}$-lc. 
Take $\pi: S\to X$ to be the minimal resolution of $X$, then $-K_S$ is big as $-K_X$ is big. We may write 
$$
K_S+B=\pi^*(K_X+\Delta+A)\sim_\mathbb{Q}0
$$
for some effective $\mathbb{Q}$-divisor $B\geq \pi^*A$. 
Then $(S, B)$ is $\frac{1}{q}$-lc.
In this case, $D:=B-\pi^*A+\pi^*H$ is an effective $\mathbb{Q}$-divisor such that $D\sim_\mathbb{Q}B\sim_\mathbb{Q}-K_S$
and the pair 
$(S, (1-t)B+tD)$ is not klt
as 
$$
K_S+(1-t)B+tD=\pi^*(K_X+\Delta+tH+ (1-t)A).$$
\end{remark}

\subsection{Weighted dual graphs}
In this subsection, we recall basic knowledge of weighted dual graphs of resolutions of surface singularities from \cite{Kol92} or \cite[4.1]{KM}. 

Let $Y$ be a normal surface and let $\pi: Y'\to Y$ be a resolution with $\pi$-exceptional curves $\{E_i\}_i$. The {\em weighted dual graph} $\Gamma$ of $\pi$ is defined as the following: each vertex $v_i$ of $\Gamma$ corresponds to a $\pi$-exceptional curve $E_i$, and it has a positive weight $-E_i^2$; two vertices $v_i$ and $v_j$ are connected by an edge of weight $m=(E_i\cdot E_j)$ if $(E_i\cdot E_j)\neq 0$. 

If $Y$ is klt and has a unique singular point, then $\Gamma$ is a tree with simple edges and all $\pi$-exceptional curves are smooth rational curves by \cite[3.2.7~Lemma]{Kol92} or \cite[Theorem~4.7]{KM}. 
In this case, denote by $\overline{v_iv_j}$ the path from $v_i$ to $v_j$, i.e., the unique shortest chain in $\Gamma$ joining $v_i$ and $v_j$. 

For any subgraph $\Gamma'\subset \Gamma$, define $\Delta(\Gamma')$ to be the absolute value of the determinant of the matrix $[(E_{i}\cdot E_{j})]$, made up by vertices in $\Gamma'$. Here $\Delta(\emptyset)=1$ by default.

We will often use the following lemma to compute log discrepancies and multiplicities of exceptional divisors.

 \begin{lem}[{cf. \cite[(3.1.10)]{Kol92}}]\label{lem:LD}
Let $\pi: Y'\to Y$ be a resolution of a klt surface singularity $P\in Y$. 
Suppose that the set of $\pi$-exceptional curves is $\{E_1, E_2, \dots, E_n\}$. 
Denote by $\Gamma$ the weighted dual graph of $\pi$. \begin{enumerate}
 \item Then for each $1\leq k\leq n$, the log discrepancy
$$a(E_k, Y, 0)=\frac{\sum_{j=1}^n (2-(\sum_{i\neq j}E_{i} \cdot E_j) )\cdot\Delta(\Gamma\setminus \overline{v_kv_j})}{\Delta(\Gamma)}.$$

\item 
If $C$ is an irreducible curve on $Y$, then for each $1\leq k\leq n$, 
$$\mult_{E_k} \pi^*C=\frac{\sum_{j=1}^n(\pi^{-1}_*C\cdot E_j)\cdot\Delta(\Gamma\setminus \overline{v_kv_j})}{\Delta(\Gamma)}.$$
\end{enumerate}

\end{lem}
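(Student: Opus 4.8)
The plan is to observe that each of the two vectors $\bigl(a(E_k,Y,0)\bigr)_{k}$ and $\bigl(\mult_{E_k}\pi^{*}C\bigr)_{k}$ is the solution of a linear system whose coefficient matrix is the intersection matrix $M=[(E_i\cdot E_j)]$, and then to invert $M$ by the classical path formula for trees. Working over a neighbourhood of $P$ we may assume $P$ is the only singular point, so $\Gamma$ is a tree, each $E_i\cong\bP^1$, and $M$ is negative definite (the exceptional set of a resolution of a normal surface singularity). Set $N=-M$: then $N$ is symmetric positive definite with $N_{ij}\le 0$ for $i\ne j$, hence every principal submatrix of $N$ is positive definite, and therefore $\Delta(\Gamma')=\det\bigl(N|_{\Gamma'}\bigr)$ for every subgraph $\Gamma'$; in particular $\Delta(\Gamma)=\det N$.

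First I would set up the two systems. For (1), intersecting $K_{Y'}=\pi^{*}K_Y+\sum_k\bigl(a(E_k,Y,0)-1\bigr)E_k$ with $E_j$ and using $\pi^{*}K_Y\cdot E_j=0$ together with adjunction $K_{Y'}\cdot E_j=-2-E_j^{2}$ gives $\sum_k\bigl(a(E_k,Y,0)-1\bigr)(E_k\cdot E_j)=-2-E_j^{2}$ for all $j$, i.e. $M\mathbf a'=\mathbf b$ with $\mathbf a'_k=a(E_k,Y,0)-1$ and $\mathbf b_j=-2-E_j^{2}$. For (2), intersecting $\pi^{*}C=\pi^{-1}_{*}C+\sum_k(\mult_{E_k}\pi^{*}C)E_k$ with $E_j$ and using $\pi^{*}C\cdot E_j=0$ gives $M\mathbf c=-\mathbf r$ with $\mathbf c_k=\mult_{E_k}\pi^{*}C$ and $\mathbf r_j=\pi^{-1}_{*}C\cdot E_j$.

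The heart of the matter is the identity $(N^{-1})_{kj}=\Delta(\Gamma\setminus\overline{v_kv_j})/\Delta(\Gamma)$, which is \cite[(3.1.10)]{Kol92}; alternatively it can be proved by induction on $n$. For the induction, by Cramer's rule $(N^{-1})_{kj}=(-1)^{k+j}\det(N^{[jk]})/\det N$, where $N^{[jk]}$ denotes $N$ with row $j$ and column $k$ deleted; the case $j=k$ is immediate since then $N^{[jk]}=N|_{\Gamma\setminus\{v_k\}}$, and for $j\ne k$ one expands $\det(N^{[jk]})$ successively along the rows (resp. columns) indexed by the interior vertices of the path $\overline{v_kv_j}$ — each such row has exactly one surviving nonzero entry, equal to $-1$, because $\Gamma$ is a tree — which peels off the path and leaves precisely the principal submatrix on $\Gamma\setminus\overline{v_kv_j}$, the accumulated signs combining to $+\Delta(\Gamma\setminus\overline{v_kv_j})$. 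I expect this sign bookkeeping to be the only genuine obstacle; everything else is formal.

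Granting the path formula I would conclude as follows. For (2): $\mathbf c=N^{-1}\mathbf r$, so $\mult_{E_k}\pi^{*}C=\sum_j(N^{-1})_{kj}(\pi^{-1}_{*}C\cdot E_j)$, which is the asserted formula. For (1): $\mathbf a'=-N^{-1}\mathbf b$, so $a(E_k,Y,0)-1=\sum_j(N^{-1})_{kj}(2+E_j^{2})$; since $N^{-1}M=-I$, applying both sides to $\mathbf 1=(1,\dots,1)^{T}$ yields $-1=\sum_j(N^{-1})_{kj}\bigl(E_j^{2}+\sum_{i\ne j}E_i\cdot E_j\bigr)$, and subtracting this relation from the previous one cancels the $E_j^{2}$ terms and gives $a(E_k,Y,0)=\sum_j(N^{-1})_{kj}\bigl(2-\sum_{i\ne j}E_i\cdot E_j\bigr)$, i.e. formula (1).
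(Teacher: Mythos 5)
Your proposal is correct and is essentially the paper's proof made explicit: the paper simply cites \cite{Kol92} ((3.1.10) for (1), and 3.1.9~Lemma plus Cramer's rule for (2)), which is exactly your setup of the two intersection-theoretic linear systems (adjunction with $E_j\cong\mathbb{P}^1$ for (1), $\pi^*C\cdot E_j=0$ for (2)) solved via the path-minor formula $(N^{-1})_{kj}=\Delta(\Gamma\setminus\overline{v_kv_j})/\Delta(\Gamma)$ for the tree-patterned matrix $N=-[(E_i\cdot E_j)]$, including your trick of pairing with $\mathbf{1}$ to trade $E_j^2$ for $-\sum_{i\neq j}E_i\cdot E_j$. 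The only caution, if you prove that matrix identity by cofactor expansion instead of citing it, is that a path vertex with branches (a fork) has more than one off-diagonal $-1$ in its row, so "exactly one surviving nonzero entry" is not literally true and one must add the standard observation that the branch terms of the expansion vanish; this is a routine fix and does not affect correctness.
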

\begin{proof}
(1) is just \cite[(3.1.10)]{Kol92} and (2) can be deduced in the same way by applying \cite[3.1.9~Lemma]{Kol92} and the Cramer's rule.
\end{proof}

The following lemma will be used to deal with the weighted dual graph of the minimal resolution of a cyclic quotient singularity.

 \begin{lem}\label{lem:chain}
 Let $\Gamma$ be a chain with vertices $v_1, \dots, v_n$ ordering in the natural sense that $v_i$ is connected to $v_{i+1}$ by an edge for $1\leq i\leq n-1$. Suppose that for each $1\leq i\leq n$, the weight of $v_i$ is $m_i$ with $m_i\geq 2$.
 Then the following assertions hold:
 \begin{enumerate}
 \item $\Delta(\Gamma)=m_1\cdot \Delta(\Gamma\setminus\{v_1\})-\Delta(\Gamma\setminus\overline{v_1v_2})$;
 
 \item $\Delta(\Gamma\setminus\overline{v_1v_k})=m_{k+1}\cdot \Delta(\Gamma\setminus\overline{v_1v_{k+1}})-\Delta(\Gamma\setminus\overline{v_1v_{k+2}})$ for $1\leq k\leq n-2$;

\item $\Delta(\Gamma)>\Delta(\Gamma\setminus\{v_1\})>\Delta(\Gamma\setminus\overline{v_1v_2})>\dots> \Delta(\Gamma\setminus\overline{v_1v_n})=1$;
 
 \item $\Delta(\Gamma)\geq n+1$ and $\Delta(\Gamma\setminus\overline{v_1v_k})\geq n-k+1$ for $1\leq k\leq n$; moreover,
 the equalities hold if all $m_i=2$; 
 
 \item if $\Delta(\Gamma)=\Delta(\Gamma\setminus\{v_1\})+1$, then $\Delta(\Gamma)=n+1$;


 \item if $m_{i_0}\geq3$ for some $1\leq i_0\leq n$, then $\Delta(\Gamma)>(i_{0}+1)\Delta(\Gamma\setminus\overline{v_{1}v_{i_0}})$.
 
 \end{enumerate}
\end{lem}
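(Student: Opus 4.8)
The plan is to encode all six assertions in a single three‑term recursion and then feed it into short inductions. For $1\le k\le n$ put $D_k:=\Delta(\Gamma\setminus\overline{v_1v_k})$ and $D_0:=\Delta(\Gamma)$, so that $D_k$ is the absolute value of the determinant of the tridiagonal matrix attached to the tail subchain $v_{k+1},\dots,v_n$, with diagonal entries $-m_{k+1},\dots,-m_n$ and off‑diagonal entries $(E_i\cdot E_{i+1})=1$ (the edges of $\Gamma$ are simple since $Y$ is klt). Expanding this determinant along its first row and column, and inducting on the size of the block, shows that the $(n-k)\times(n-k)$ determinant has sign $(-1)^{n-k}$, so each $D_k$ is a positive integer, and that
\[
D_{k-1}=m_k D_k-D_{k+1}\qquad(1\le k\le n),
\]
with the conventions $D_n=1$ (empty subchain) and $D_{n+1}:=0$. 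Assertion (1) is the case $k=1$ of this identity and assertion (2) is the same identity for $1\le k\le n-2$, so I would record the recursion first and then just read off (1) and (2).

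Next I would treat (3) and (4) by downward induction on $k$. From $D_n=1$, $D_{n-1}=m_n\ge 2$, and
\[
D_{k-1}=m_kD_k-D_{k+1}\ \ge\ 2D_k-D_{k+1}\ =\ D_k+(D_k-D_{k+1}),
\]
together with the inductive hypothesis $D_k>D_{k+1}$, one obtains $D_{k-1}>D_k$, establishing (3). Since the $D_k$ are integers, (3) upgrades to $D_{k-1}\ge D_k+1$, and the same downward induction starting from $D_n=1$ then gives $D_k\ge n-k+1$ and $D_0\ge n+1$, which is (4); when all $m_i=2$ the displayed inequality becomes the equality $D_{k-1}-D_k=D_k-D_{k+1}$, so $(D_k)$ is arithmetic with common difference $1$, forcing $D_k=n-k+1$ throughout. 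For (5) I would induct on $n$: the case $n=1$ is the tautology that $D_0=D_1+1$ means $m_1=2=n+1$; for $n\ge 2$, $D_0=D_1+1$ combined with $D_0=m_1D_1-D_2$ and $D_2\le D_1-1$ (from (3)) forces $(m_1-2)D_1\le 0$, hence $m_1=2$ and $D_1-D_2=1$, and applying the statement to the subchain $v_2,\dots,v_n$ (for which $D_1,D_2$ play the roles of $\Delta(\Gamma),\Delta(\Gamma\setminus\{v_1\})$) gives $D_1=n$, so $D_0=n+1$.

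The only assertion requiring a genuinely new idea is (6), which I would deduce from the \emph{convexity} of the sequence $(D_k)$. Set $d_k:=D_k-D_{k+1}$ for $0\le k\le n$; by (3) (and $D_{n+1}=0$) each $d_k\ge 1$, and the recursion gives
\[
d_{k-1}-d_k=(m_k-2)D_k\ \ge\ 0\qquad(1\le k\le n),
\]
so $(d_k)$ is non‑increasing. If moreover $m_{i_0}\ge 3$, the same identity yields $d_{i_0-1}-d_{i_0}=(m_{i_0}-2)D_{i_0}\ge D_{i_0}$, hence $d_{i_0-1}\ge D_{i_0}+d_{i_0}\ge D_{i_0}+1$; telescoping then gives
\[
D_0=D_{i_0}+\sum_{k=0}^{i_0-1}d_k\ \ge\ D_{i_0}+i_0\,d_{i_0-1}\ \ge\ D_{i_0}+i_0(D_{i_0}+1)\ >\ (i_0+1)D_{i_0},
\]
which is (6). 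The main (and essentially the only) obstacle is bookkeeping: correctly matching the combinatorial subgraphs $\Gamma\setminus\overline{v_1v_k}$ with the tail subchains, checking that the edges are simple so the matrices are genuinely tridiagonal with off‑diagonal entry $1$, and handling the boundary indices $k\in\{n-1,n\}$ cleanly (which the conventions $D_n=1$, $D_{n+1}=0$ are designed to absorb). Once the recursion above is in place, each of (1)–(6) is a two‑line induction or a single telescoping estimate.
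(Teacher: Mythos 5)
Your proposal is correct. Assertions (1)--(5) are handled in essentially the same way as the paper: both set up the three-term recursion $D_{k-1}=m_kD_k-D_{k+1}$ (the paper records it as assertions (1)(2) plus the derived inequalities), deduce (3) by downward induction, read off (4) from the integer step, and settle (5) by forcing all the $m_i$ to equal $2$. Your organization via the conventions $D_n=1$, $D_{n+1}=0$ is a cosmetic improvement, and your version of (5) inducts on $n$ while the paper argues that equality in (3.1)--(3.2) propagates, but these are the same idea.

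Where you genuinely diverge is assertion (6). The paper proves, by an explicit auxiliary induction, the intermediate estimate $\Delta(\Gamma)\ge j\,\Delta(\Gamma\setminus\overline{v_1v_{j-1}})-(j-1)\Delta(\Gamma\setminus\overline{v_1v_j})$ and then substitutes the recursion once more to reach $\Delta(\Gamma)>(jm_j-2j+1)\Delta(\Gamma\setminus\overline{v_1v_j})$. You instead introduce the difference sequence $d_k=D_k-D_{k+1}$, observe that the recursion gives $d_{k-1}-d_k=(m_k-2)D_k\ge 0$ (so $(d_k)$ is non-increasing, i.e.\ $(D_k)$ is convex), note that a weight $\ge 3$ at $i_0$ forces a jump $d_{i_0-1}\ge D_{i_0}+1$, and telescope $D_0-D_{i_0}=\sum_{k=0}^{i_0-1}d_k\ge i_0\,d_{i_0-1}$. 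This is shorter and more conceptual; it isolates the single structural fact (convexity of $D_k$) that drives (3), (4), and (6) simultaneously, whereas the paper repeats a separate inductive estimate. The trade-off is negligible; both are elementary and of comparable length once written out, but your route makes clearer why a single weight $\ge 3$ produces the multiplicative gap $(i_0+1)$.

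One small remark: you appeal to $Y$ being klt to justify that edges are simple. The lemma as stated is purely combinatorial, and the simple-edge assumption is already packaged into the phrase ``$\Gamma$ is a chain''; no geometric input is needed at that point. This does not affect the correctness of your argument.
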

\begin{proof} 
Assertions (1) and (2) can be calculated easily from determinants.
By Assertions (1)(2) and the fact that $m_i\geq 2$, 
we have 
\begin{align}\label{eq:Delta1}
 \Delta(\Gamma)\geq 2 \Delta(\Gamma\setminus\{v_1\})-\Delta(\Gamma\setminus\overline{v_1v_2})
\end{align} 
and 
\begin{align}\label{eq:Delta2}
\Delta(\Gamma\setminus\overline{v_1v_k})\geq 2 \Delta(\Gamma\setminus\overline{v_1v_{k+1}})-\Delta(\Gamma\setminus\overline{v_1v_{k+2}})\end{align} 
for $1\leq k\leq n-2$.
So Assertion (3) follows inductively from the fact that 
$$
\Delta(\Gamma\setminus\overline{v_1v_{n-1}})-\Delta(\Gamma\setminus\overline{v_1v_{n}})=m_n-1>0.
$$
Assertion (4) follows from Assertion (3) and direct computation. 

For Assertion (5), the assumption combining with Eq.~\eqref{eq:Delta1} and Eq.~\eqref{eq:Delta2} implies that 
$$
\Delta(\Gamma\setminus\overline{v_1v_{k+1}})-\Delta(\Gamma\setminus\overline{v_1v_{k+2}})=1
$$
for $1\leq k\leq n-2$. Moreover, Eq.~\eqref{eq:Delta1} and Eq.~\eqref{eq:Delta2} become equalities, and hence all $m_i=2$. So $\Delta(\Gamma)=n+1$ by Assertion~(4).

By applying 
Eq.~\eqref{eq:Delta1} and Eq.~\eqref{eq:Delta2} inductively, 
one can see that 
\begin{align*}
 \Delta(\Gamma)\geq j \Delta(\Gamma\setminus\overline{v_1v_{j-1}})-(j-1)\Delta(\Gamma\setminus\overline{v_1v_j})
\end{align*}
for $1\leq j\leq n$. Here we set $\Gamma\setminus\overline{v_1v_{0}}=\Gamma$.
Then by Assertions (1)(2)(3), for $1\leq j\leq n$, we have
\begin{align*}
 \Delta(\Gamma){}&\geq j(m_j \Delta(\Gamma\setminus\overline{v_1v_{j}})-\Delta(\Gamma\setminus\overline{v_1v_{j+1}}))-(j-1)\Delta(\Gamma\setminus\overline{v_1v_j})\\
 {}&=(j m_j-j+1) \Delta(\Gamma\setminus\overline{v_1v_{j}})-j \Delta(\Gamma\setminus\overline{v_1v_{j+1}})\\
 {}&>(j m_j-2j+1) \Delta(\Gamma\setminus\overline{v_1v_{j}}).
\end{align*}
Here we set $\Delta(\Gamma\setminus\overline{v_1v_{n+1}})=0$.
In particular, if $m_{i_0}\geq 3$, then $\Delta(\Gamma)>(i_{0}+1)\Delta(\Gamma\setminus\overline{v_{1}v_{i_0}})$.
\end{proof}

\subsection{Geometric structure of $\delta$-lc surface pairs}

\begin{lem}\label{lem:multEC}
Fix a real number $0<\delta<\frac{1}{6}$ and a positive integer $N$.
Let $Y$ be a normal surface and let $C$ be an irreducible curve on $Y$ such that $(Y, (1-\delta)C)$ is $\delta$-lc.
Let $\pi: Y'\to Y$ be the minimal resolution of $Y$.
Suppose that $\rho(Y'/Y)\leq N$. Then $\mult_{E}\pi^*C> \frac{\delta}{N+1}$ for any $\pi$-exceptional divisor $E$ on $Y'$ such that $\pi(E)\in C$.

\end{lem}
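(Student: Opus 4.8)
The goal is to bound below the multiplicity of any exceptional divisor $E$ over a point $P := \pi(E) \in C$. Since multiplicities of exceptional divisors over different points are computed independently, I would fix the singular point $P \in C$ and work locally. The pair $(Y, (1-\delta)C)$ being $\delta$-lc forces $Y$ to be $\delta$-klt, so $P$ is a klt surface singularity; by the results recalled above, the weighted dual graph $\Gamma$ of the minimal resolution of $P$ is a tree of smooth rational curves with simple edges, and $\rho(Y'/Y) \leq N$ bounds the number of exceptional curves over $P$ by $N$. So I may assume $\Gamma$ has vertices $E_1,\dots,E_n$ with $n \leq N$, all self-intersections $\leq -2$ (minimality).

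\textbf{Key steps.} First, I would use Lemma~\ref{lem:LD}(2) to write, for the chosen exceptional divisor $E = E_k$,
$$
\mult_{E_k}\pi^*C = \frac{\sum_{j=1}^n (\pi^{-1}_*C \cdot E_j)\,\Delta(\Gamma\setminus\overline{v_kv_j})}{\Delta(\Gamma)}.
$$
Since $C$ passes through $P$, at least one coefficient $(\pi^{-1}_*C \cdot E_j)$ is positive, so picking that $j = j_0$ gives
$$
\mult_{E_k}\pi^*C \geq \frac{\Delta(\Gamma\setminus\overline{v_kv_{j_0}})}{\Delta(\Gamma)}.
$$
Thus the task reduces to a purely combinatorial estimate: showing $\Delta(\Gamma\setminus\overline{v_kv_{j_0}})/\Delta(\Gamma) > \delta/(N+1)$. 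Second, I would bring in the $\delta$-lc hypothesis via Lemma~\ref{lem:LD}(1): the log discrepancy $a(E_k, Y, (1-\delta)C) \geq \delta$ translates, after accounting for the $(1-\delta)C$ term (which only lowers the discrepancy), into $a(E_k, Y, 0) \geq \delta + (1-\delta)\,\mult_{E_k}\pi^*C$, and in particular $a(E_k,Y,0) > \delta$. The formula for $a(E_k,Y,0)$ is a ratio with denominator $\Delta(\Gamma)$ and numerator a signed sum of terms $\Delta(\Gamma\setminus\overline{v_kv_j})$ weighted by $2 - (\text{valence of }v_j)$; since $\Gamma$ is a tree, the valences sum to $2(n-1)$, so these weights sum to $2$. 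Third, I would combine these: the numerator of $a(E_k,Y,0)$ is at most (roughly) $2\Delta(\Gamma\setminus\{\text{appropriate subgraph}\})$ type quantities, and one argues that $\Delta(\Gamma\setminus\overline{v_kv_j}) \leq \Delta(\Gamma\setminus\overline{v_kv_{j_0}}) \cdot (\text{something controlled by the tree structure, e.g. a bound like } n \text{ or } N)$ — using chain sub-estimates from Lemma~\ref{lem:chain} along the branches. This should yield $\Delta(\Gamma) < \frac{(N+1)}{\delta}\,\Delta(\Gamma\setminus\overline{v_kv_{j_0}})$ or similar, giving the desired bound.

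\textbf{Main obstacle.} The delicate point is controlling the ratio $\Delta(\Gamma\setminus\overline{v_kv_{j_0}})/\Delta(\Gamma)$ uniformly in terms of $n$ (hence $N$) and $\delta$, when $\Gamma$ is a genuine tree rather than a chain. Lemma~\ref{lem:chain} handles chains cleanly, but for trees one must decompose $\Gamma$ along the path $\overline{v_kv_{j_0}}$ into chains and branches and track how determinants multiply across branch vertices; the inequality $2 - \text{val}(v_j) \leq 0$ at branch vertices is what keeps the log-discrepancy numerator small and should be exactly the leverage needed. I expect the argument to run: from $a(E_k,Y,0) > \delta$ and the tree structure, deduce $\Delta(\Gamma\setminus\{v_k\text{-component}\}) > \delta\,\Delta(\Gamma)$ or an analogous statement pinning $\Delta(\Gamma)$, then note $\Delta(\Gamma\setminus\overline{v_kv_{j_0}}) \geq \frac{1}{n}\Delta(\Gamma\setminus\{v_k\})$-type bounds along the one branch containing $v_{j_0}$ (each of the $\leq n$ steps along a chain loses at most a bounded factor by Lemma~\ref{lem:chain}(3)(4)), and assemble everything with $n \leq N$. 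Keeping the constant at exactly $\delta/(N+1)$ rather than something weaker will require being careful about which inequalities in Lemma~\ref{lem:chain} are tight.
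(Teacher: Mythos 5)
Your opening reduction is exactly the paper's: since $\mult_{E}\pi^*C$ is a positive rational with denominator dividing $\Delta(\Gamma)$ (Lemma~\ref{lem:LD}), it is at least $1/\Delta(\Gamma)$, so the whole lemma reduces to proving $\Delta(\Gamma) < \frac{N+1}{\delta}$. The problem is in how you propose to prove that bound; there are two genuine gaps.

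First, you record the full inequality $a(E_k,Y,0) \geq \delta + (1-\delta)\mult_{E_k}\pi^*C$ but then pivot to working only with the weakened consequence $a(E_k,Y,0) > \delta$. That discarded $(1-\delta)\mult$ term is precisely what makes the bound work and cannot be dropped. For a concrete failure: take $\Gamma$ a chain with $\pi^{-1}_*C$ meeting $E_1$ only and weight $\geq 3$ at some vertex $v_{i_0}$ with $i_0 \leq \frac{1}{\delta}-1$. Then $a(E_{i_0},Y,0) = \frac{i_0 + \Delta(\Gamma\setminus\overline{v_1v_{i_0}})}{\Delta(\Gamma)}$, and because $\Delta(\Gamma\setminus\overline{v_1v_{i_0}})$ can be a sizeable fraction of $\Delta(\Gamma)$ (up to $\frac{1}{i_0+1}\Delta(\Gamma)$ by Lemma~\ref{lem:chain}(6)), the condition $a(E_{i_0},Y,0)>\delta$ places \emph{no} upper bound on $\Delta(\Gamma)$ at all when $\delta(i_0+1)\leq 1$. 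By contrast, the condition $a(E_{i_0},Y,(1-\delta)C)\geq\delta$, which the paper uses, does give $\Delta(\Gamma) < \frac{i_0+1}{\delta}$. So your "in particular" step throws away too much.

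Second, you worry — rightly — about the case where $\Gamma$ is a genuine tree rather than a chain, and you sketch a decomposition-along-branches argument which you admit is delicate. The paper avoids this entirely via a structural input you never invoke: because $\delta < \frac{1}{6}$, the $\delta$-lc condition on $(Y,(1-\delta)C)$ forces $(Y,C)$ to be \emph{lc} (\cite[Corollary~6.0.9]{Prok01}), and the dual graph of a log canonical pair $(Y,C)$ with $P \in C$ is completely classified by \cite[Theorem~4.15]{KM}: $\Gamma$ is either a chain with $\pi^{-1}_*C$ at both ends, a dihedral-type graph (single fork with two $(-2)$-tails and $\Gamma' \cup v_f$ dominated by a $2$-gluing), or a chain with $\pi^{-1}_*C$ at one end. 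In each case the paper picks one well-chosen exceptional divisor (an end vertex, the fork, or the first weight-$\geq 3$ vertex) and applies the full discrepancy inequality at that vertex to get $\Delta(\Gamma) \leq N+1$ (cases 1 and 2, and the all-$(-2)$ part of case 3) or $\Delta(\Gamma) < \frac{N+1}{\delta}$ (the remaining subcase of 3). Without this classification your generic-tree argument has no leverage: the sum of weights $2-\mathrm{val}(v_j)$ being $2$ does not control the individual $\Delta(\Gamma\setminus\overline{v_kv_j})$ well enough, and the "multiply a chain estimate across branches" step has no sharp version. The missing idea is not a cleverer determinant manipulation but the reduction to the three lc models.

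One further structural difference worth noting: the paper applies the discrepancy condition at a \emph{single} strategically chosen vertex to bound $\Delta(\Gamma)$ once and for all, then the denominator fact handles every $E_k$ simultaneously; you instead try to control the ratio $\Delta(\Gamma\setminus\overline{v_kv_{j_0}})/\Delta(\Gamma)$ for each $E_k$ using the discrepancy at that same $E_k$. This is both harder and unnecessary once you see the denominator shortcut.
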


\begin{proof}
By shrinking $Y$ if necessary, we may assume that $P\in Y$ is the only singular point on $Y$ and $P\in C$. Denote by $\Gamma$ the weighted dual graph of $\pi$. For any $\pi$-exceptional divisor $E$, clearly $\mult_{E}\pi^*C$ is a positive rational number and its denominator divides $\Delta(\Gamma)$ by Lemma~\ref{lem:LD}. So it suffices to show that $\Delta(\Gamma)<\frac{N+1}{\delta}$.

By \cite[Corollary~6.0.9]{Prok01}, $(Y, C)$ is lc.
Then the weighted dual graph $\Gamma$ of the minimal resolution $\pi: Y'\to Y$ are classified into $3$ cases as in \cite[Theorem~4.15]{KM}. 
We split the discussion into these $3$ cases. 
Denote by $m=\rho(Y'/Y)$ the number of $\pi$-exceptional curves on $Y'$.

As $(Y, (1-\delta)C)$ is $\delta$-lc, $a(E, Y, (1-\delta)C)\geq \delta$ for any prime divisor $E$ over $Y$. We will apply this fact to some specially chosen $E$.

\medskip

{\bf Case (1)}: For the case \cite[Theorem~4.15(1)]{KM}, $\Gamma$ is a chain with vertices $v_1, \cdots, v_m$ corresponding to $\pi$-exceptional curves $E_1, \cdots, E_m$ such that $\pi^{-1}_*C$ intersects $E_1$ and $E_m$. 

If $m\geq 2$, by Lemma~\ref{lem:LD}, we have
\[
 a(E_1, Y, 0)= \mult_{E_1}\pi^*C=\frac{\Delta(\Gamma\setminus\{v_1\})+\Delta(\emptyset)}{\Delta(\Gamma)}.
\]
Then 
\begin{align*}
 a(E_1, Y, (1-\delta)C)={}&a(E_1, Y, 0)-(1-\delta)\mult_{E_1}\pi^*C\\
 ={}&\delta\cdot\frac{\Delta(\Gamma\setminus\{v_1\})+1}{\Delta(\Gamma)}.
\end{align*}
Therefore, $a(E_1, Y, (1-\delta)C)\geq \delta$ implies that $\Delta(\Gamma\setminus\{v_1\})+1\geq \Delta(\Gamma)$.
So by Lemma~\ref{lem:chain}(5), $\Delta(\Gamma)=m+1\leq N+1$.

If $m=1$, then a similar computation by Lemma~\ref{lem:LD} shows that 
\[
 a(E_1, Y, (1-\delta)C)=\delta\cdot\frac{2\Delta(\Gamma\setminus\{v_1\})}{\Delta(\Gamma)}=\frac{2\delta}{\Delta(\Gamma)}.
\]
Therefore, $a(E_1, Y, (1-\delta)C)\geq \delta$ implies that $\Delta(\Gamma)\leq 2$.

\medskip

{\bf Case (2)}: For the case \cite[Theorem~4.15(2)]{KM}, we have $m\geq 3$. 

If $m\geq 4$, then
$\Gamma$ is a tree with only one fork. Let $E_{\text{f}}$ be the $\pi$-exceptional curve corresponding to the fork vertex $v_{\text{f}}$.
Then $\Gamma\setminus \{v_{\text{f}}\}=\Gamma'\cup\{v_1\}\cup\{v_2\}$, where $v_1, v_2$ correspond to $(-2)$-curves intersecting $E_{\text{f}}$ and $\Gamma'$ is the chain corresponding to curves connecting $E_{\text{f}}$ and $\pi^{-1}_*C$.

Then by Lemma~\ref{lem:LD},
\begin{align*}
 {}&a(E_{\text{f}}, Y, 0)\\
 ={}&\frac{\Delta(v_1)\cdot \Delta(v_2)-\Delta(\Gamma')\cdot \Delta(v_1)\cdot \Delta(v_2)+\Delta(\Gamma')\cdot \Delta(v_2)+\Delta(\Gamma')\cdot \Delta(v_1)}{\Delta(\Gamma)}\\
 ={}&\frac{4}{\Delta(\Gamma)}
\end{align*}
and
 \[ \mult_{E_{\text{f}}}\pi^*C=\frac{\Delta(v_1)\cdot \Delta(v_2)}{\Delta(\Gamma)}=\frac{4}{\Delta(\Gamma)}.
\]
Then $a(E_{\text{f}}, Y, (1-\delta)C)=\frac{4\delta}{\Delta(\Gamma)} $. Therefore, $a(E_{\text{f}}, Y, (1-\delta)C)\geq \delta$ implies that $\Delta(\Gamma)\leq4$.

If $m=3$, then
$\Gamma$ is a chain consisting of $3$ vertices $v_1, v_2, v_3$ corresponding to $E_1, E_2, E_3$ such that $E_1$ and $E_3$ are $(-2)$-curves and $\pi^{-1}_*C$ intersects $E_2$.
Then a similar computation by Lemma~\ref{lem:LD} shows that 
$a(E_{2}, Y, (1-\delta)C)=\frac{4\delta}{\Delta(\Gamma)}$. Therefore, $a(E_{2}, Y, (1-\delta)C)\geq \delta$ implies that $\Delta(\Gamma)\leq4$.

\medskip

{\bf Case (3)}:
For the case \cite[Theorem~4.15(3)]{KM}, $\Gamma$ is a chain with vertices $v_1, \cdots, v_m$ corresponding to $\pi$-exceptional curves $E_1, \cdots, E_m$ such that $\pi^{-1}_*C$ intersects $E_1$. If all $E_i$ are $(-2)$-curves for $1\leq i\leq m$, then by Lemma~\ref{lem:chain}(4), $\Delta(\Gamma)=m+1\leq N+1$. 
If 
$-E_{i_0}^2\geq3$ for some $1\leq i_0\leq m$, take $i_0$ to be the minimal one,
then by Lemma~\ref{lem:LD} and Lemma~\ref{lem:chain}(4),
\begin{align*}
a(E_{i_0}, Y, (1-\delta)C)
 ={}&\frac{\Delta(\Gamma\setminus\overline{v_{i_0}v_m})}{\Delta(\Gamma)}+\delta\cdot\frac{\Delta(\Gamma\setminus\overline{v_{1}v_{i_0}})}{\Delta(\Gamma)} \\
 ={}&\frac{i_0}{\Delta(\Gamma)}+\delta\cdot\frac{\Delta(\Gamma\setminus\overline{v_{1}v_{i_0}})}{\Delta(\Gamma)} \\
 <{}&\frac{i_0}{\Delta(\Gamma)}+\frac{\delta}{i_0+1}.
\end{align*}
Here the last inequality is by Lemma~\ref{lem:chain}(6).
Therefore, $a(E_{i_0}, Y, (1-\delta)C)\geq \delta$ implies that $\Delta(\Gamma)<\frac{i_0+1}{\delta}\leq\frac{N+1}{\delta}$. 
\end{proof}

\begin{lem}[{cf. \cite[Claim~2]{Jiang21}}]\label{lem:C2>-2}
Fix $0<\delta<1$. 
Let $X$ be a smooth projective surface and let $(X, B)$ be a $\delta$-lc pair such that $K_X+B\equiv 0$.
Then $C^2\geq -\frac{2}{\delta}$ for any irreducible curve $C$ on $X$.
\end{lem}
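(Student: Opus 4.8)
The plan is to run an adjunction/intersection-number argument directly on the smooth surface $X$. Fix an irreducible curve $C$ on $X$ and suppose for contradiction that $C^2<-\frac{2}{\delta}$. Since $X$ is smooth, the adjunction formula gives $K_X\cdot C+C^2=2p_a(C)-2\geq -2$, hence $K_X\cdot C\geq -2-C^2>\frac{2}{\delta}-2$. The idea is to exploit this together with $K_X+B\equiv 0$, which forces $B\cdot C=-K_X\cdot C<2-\frac{2}{\delta}<0$ (using $\delta<1$). A negative intersection of the effective divisor $B$ with the irreducible curve $C$ means $C$ is a component of $B$, so we can write $B=bC+B'$ with $b>0$ and $B'$ effective not containing $C$, and then $B'\cdot C\geq 0$.

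From here I would extract a large coefficient $b=\coeff_C(B)$ and contradict the $\delta$-lc hypothesis. Intersecting $B=bC+B'$ with $C$ gives $B\cdot C=bC^2+B'\cdot C\geq bC^2$, so $b\leq \frac{B\cdot C}{C^2}$ (note $C^2<0$, so the inequality direction flips), i.e. $b\geq \frac{B\cdot C}{C^2}=\frac{K_X\cdot C}{-C^2}$ after using $B\cdot C=-K_X\cdot C$. Wait—more carefully: $B\cdot C=bC^2+B'\cdot C$ with $B'\cdot C\geq 0$ and $C^2<0$ gives $B\cdot C\leq bC^2$, hence $b\leq \frac{B\cdot C}{C^2}=\frac{-K_X\cdot C}{C^2}=\frac{K_X\cdot C}{-C^2}$. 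Plugging in $K_X\cdot C\geq -2-C^2$ and $-C^2>\frac{2}{\delta}$, I get $b\geq \frac{-2-C^2}{-C^2}=1+\frac{2}{C^2}=1-\frac{2}{-C^2}>1-\delta$. So $\coeff_C(B)>1-\delta$, which means the log discrepancy of $C$ itself with respect to $(X,B)$ is $a(C,X,B)=1-\coeff_C(B)<\delta$, contradicting that $(X,B)$ is $\delta$-lc (on a smooth surface, the prime divisor $C$ on $X$ has log discrepancy $1-\coeff_C(B)$, and this must be $\geq\delta$).

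One subtlety to nail down is the inequality $b\geq\frac{-K_X\cdot C}{-C^2}$: from $-K_X\cdot C=B\cdot C=bC^2+B'\cdot C$ and $B'\cdot C\geq 0$, $C^2<0$ we get $-K_X\cdot C\leq bC^2<0$, so dividing by $C^2<0$ reverses the inequality to give $b\leq \frac{-K_X\cdot C}{C^2}$; but $\frac{-K_X\cdot C}{C^2}=\frac{K_X\cdot C}{-C^2}$ and since $K_X\cdot C>0$ here this is positive, and we actually want a \emph{lower} bound on $b$. So the correct chain is $b C^2 \leq -K_X\cdot C$ (dropping the nonnegative $B'\cdot C$), and dividing by $C^2<0$ flips it: $b\geq \frac{-K_X\cdot C}{C^2}=\frac{K_X\cdot C}{-C^2}\geq \frac{-2-C^2}{-C^2}>1-\delta$, as needed. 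I expect the main (minor) obstacle to be bookkeeping these sign reversals correctly and confirming that on a smooth surface the coefficient of a prime divisor $C\subseteq \Supp B$ in $B$ directly computes $1-a(C,X,B)$ (which is immediate since $C$ appears on a log resolution, or even on $X$ itself, with that discrepancy); everything else is a short computation. This is essentially the argument of \cite[Claim~2]{Jiang21}, included here for completeness.
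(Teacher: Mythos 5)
Your final argument is correct and is essentially the same computation as the paper's proof: both combine the genus formula, the bound $\coeff_C(B)\leq 1-\delta$ coming from the $\delta$-lc hypothesis, and $K_X+B\equiv 0$, and the key inequality $\frac{K_X\cdot C}{-C^2}\leq 1-\delta$ is identical in content (the paper phrases it directly as $(K_X+(1-\delta)C)\cdot C\leq (K_X+B)\cdot C=0$ rather than by extracting $b$ and arguing by contradiction). The middle of your write-up has some sign reversals that get corrected in the final paragraph; the cleaned-up version matches the intended argument.
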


\begin{proof}
We may assume that $C^2<0$.
Then by the genus formula,
\begin{align*}
 -2{}&\leq 2p_a(C)-2=(K_X+C)\cdot C\\
 {}&= \delta C^2+(K_X+(1-\delta)C)\cdot C\\
 {}&\leq \delta C^2+(K_X+B)\cdot C=\delta C^2.
\end{align*}
\end{proof}

\begin{lem}\label{lem:KC<3}
Let $X$ be a normal projective surface such that $K_X$ is $\mathbb{Q}$-Cartier and not pseudo-effective. Then either $X\simeq \mathbb{P}^2$ or $X$ is covered by a family of rational curves $C$ such that $(-K_X\cdot C)\leq 2$.
\end{lem}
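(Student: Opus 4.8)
The plan is to reduce to the case of a smooth surface and then extract the desired covering family from a Mori fiber space produced by the minimal model program.

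First I would pass to the minimal resolution $\pi\colon \tilde X\to X$. Since a minimal resolution has all discrepancies $\le 0$, one can write $-K_{\tilde X}=\pi^*(-K_X)+F$ with $F\ge 0$ supported on the exceptional locus; in particular $K_{\tilde X}=\pi^*K_X-F$ is again not pseudo-effective, because $\pi^*K_X$ is not (pullback and pushforward of a pseudo-effective class are pseudo-effective). I would then check that a covering family of rational curves $\{\tilde C\}$ on $\tilde X$ with $(-K_{\tilde X}\cdot\tilde C)\le 2$ pushes forward to one on $X$ with the same bound: a general member $\tilde C$ is not contained in the exceptional locus, so $F\cdot\tilde C\ge 0$, $\pi|_{\tilde C}$ is birational onto its image, $\pi_*\tilde C$ is a rational curve, the family $\{\pi_*\tilde C\}$ still sweeps out $X$, and $(-K_X\cdot\pi_*\tilde C)=\pi^*(-K_X)\cdot\tilde C=(-K_{\tilde X}\cdot\tilde C)-F\cdot\tilde C\le 2$. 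I would also note that if $\tilde X\cong\mathbb{P}^2$ then $X\cong\mathbb{P}^2$ (no curve on $\mathbb{P}^2$ has negative self-intersection, so $\pi$ contracts nothing and is an isomorphism as $X$ is normal). This reduces the statement to the case $X$ smooth.

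For $X$ smooth with $K_X$ not pseudo-effective, I would run the $K_X$-MMP (see \cite[Ch.~3]{KM}), obtaining a composition of contractions of $(-1)$-curves $g\colon X=X_0\to X_1\to\cdots\to X_n=Y$ ending with a Mori fiber space structure $p\colon Y\to T$; it cannot terminate with $K_Y$ nef, since then $K_X=g^*K_Y+(\text{effective})$ would be pseudo-effective. If $\dim T=0$ then $\rho(Y)=1$ and $Y\cong\mathbb{P}^2$; if in addition $X\not\cong\mathbb{P}^2$ then $n\ge 1$ and the penultimate surface $X_{n-1}$ is the blow-up of $\mathbb{P}^2$ at a point, i.e.\ $X_{n-1}\cong\mathbb{F}_1$, which carries a $\mathbb{P}^1$-bundle structure over $\mathbb{P}^1$, so after replacing $Y$ by $X_{n-1}$ I may assume $p\colon Y\to T$ is a $\mathbb{P}^1$-bundle over a smooth curve and $g\colon X\to Y$ is a composition of blow-downs. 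Finally, taking $F_0$ a general fiber of $p$ (so $F_0\cong\mathbb{P}^1$ and $-K_Y\cdot F_0=2$ by the genus formula and $\rho(Y/T)=1$), a general such $F_0$ avoids the finitely many points blown up by $g$; its strict transform $F\subset X$ is then isomorphic to $\mathbb{P}^1$, meets no $g$-exceptional curve, and these $F$ cover $X$. Writing $K_X=g^*K_Y+E$ with $E\ge 0$ exceptional gives $-K_X\cdot F=g^*(-K_Y)\cdot F-E\cdot F=-K_Y\cdot F_0=2$, which is the required family; this handles all cases with $X\not\cong\mathbb{P}^2$.

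The main obstacle I expect is bookkeeping around the birational modifications rather than any deep ingredient: ensuring that a general member of the covering family behaves well under both $\pi$ and $g$ (not contracted, disjoint from the exceptional curves, still sweeping out the surface), and dealing with the nuisance case where the MMP terminates at $\mathbb{P}^2$ even though $X\not\cong\mathbb{P}^2$, which is precisely why the detour through $\mathbb{F}_1$ is needed. Everything else is standard surface MMP together with the genus formula.
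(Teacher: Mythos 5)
Your proposal is correct and follows essentially the same approach as the paper: pass to the minimal resolution $\pi\colon X'\to X$ (so $K_{X'}$ is still not pseudo-effective and $X'\not\simeq\mathbb{P}^2$ when $X\not\simeq\mathbb{P}^2$), run the MMP on the smooth surface to obtain a morphism to a curve with general fiber $\mathbb{P}^1$, and push the resulting covering family down to $X$ using $\pi^*(-K_X)=-K_{X'}-G$ with $G\ge 0$. The paper simply asserts the existence of the fibration on $X'$ by "the standard minimal model program," whereas you make explicit the detour through $\mathbb{F}_1$ when the MMP terminates at $\mathbb{P}^2$ — the same argument, with a step the paper leaves to the reader spelled out.
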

\begin{proof}
Take $\pi: X'\to X$ to be the minimal resolution of $X$, then $K_{X'}+G=\pi^*K_X$ where $G$ is an effective $\mathbb{Q}$-divisor. 
Then $K_{X'}$ is not pseudo-effective as $K_X$ is not pseudo-effective. 
Suppose that $X\not \simeq \mathbb{P}^2$, then clearly $X'\not \simeq \mathbb{P}^2$. By the standard minimal model program, there exists a morphism $X'\to T$ whose general fibers are $\mathbb{P}^1$. Therefore $X'$ is covered by a family of rational curves $C'$ such that $(-K_{X'}\cdot C')=2$.
So $(-K_X\cdot \pi(C'))\leq (-K_{X'}\cdot C')= 2$.
\end{proof}

\subsection{Proof of Theorem~\ref{thm lct1}}
In this subsection we give the proof of Theorem~\ref{thm lct1}.

By \cite[Lemma~3.1]{Jiang13}, there is a birational morphism $g: S\to S'$ where $S'$ is $\bP^2$ or the $n$-th Hirzebruch surface $\mathbb{F}_n$ with $n\leq\frac{2}{\epsilon}$. Since $B\sim_{\bR}D\sim_{\bR}-K_S$, we may write 
\begin{align*}
 K_S+B={}&g^*(K_{S'}+g_*B);\\
 K_S+(1-t)B+tD={}&g^*(K_{S'}+(1-t)g_*B+tg_*D).
\end{align*}
Hence $({S'}, g_*B)$ is $\epsilon$-lc and $({S'}, (1-t)g_*B+tg_*D)$ is not klt. 
By replacing the triple $(S, B, D)$ with $(S', g_*B, g_*D)$, we may assume that $S$ is $\bP^2$ or $\mathbb{F}_n$ for some $n\leq\frac{2}{\epsilon}$.

Fix a positive real number $0<\delta < {\epsilon}$ such that $\delta<\frac{1}{6}$. Take $$
t_0=\max\{s\in \mathbb{R}\mid (S, (1-s)B+s D) \text{ is } \delta\text{-lc}\}.
$$
Then clearly $0<t_0<t$. In the following, we will show that
\begin{align}
 t_0>\frac{\delta^2(\epsilon-\delta)}{16+4\delta+\delta^2(\epsilon-1)}.\label{eq:t0>delta}
\end{align}
In particular, we can take $\delta=\frac{\epsilon}{2}$, then Eq.~\eqref{eq:t0>delta} implies that $t>t_0>\frac{3\epsilon^3}{400}.$

By the definition of $t_0$, there exists a prime divisor $E$ over $S$ such that
$$a(E, S, (1-t_0)B+t_0 D)=\delta.$$
If $E$ is a prime divisor on $S$, then as $(S, B)$ is $\epsilon$-lc, we have
\begin{align*}
 {}&\mult_E B\leq 1-\epsilon;\\
 {}&\mult_E ((1-t_0)B+t_0 D)= 1-\delta.
\end{align*}
So \[t_0\geq\frac{\epsilon-\delta}{\mult_{E}D-1+\epsilon}\geq\frac{\epsilon(\epsilon-\delta)}{2+3\epsilon+\epsilon^2}\]
which implies Eq.~\eqref{eq:t0>delta}. Here we used the fact that \[\mult_{E}D\leq \begin{cases}
3 & \text{if } S=\mathbb{P}^2;\\
n+4\leq \frac{2}{\epsilon}+4 & \text{if } S=\mathbb{F}_n,
\end{cases} 
\]
 by applying \cite[Lemma~3.3]{Jiang21} to a general point on $E$.

So from now on we may assume that $E$ is exceptional over $S$.
By \cite[Corollary~1.4.3]{BCHM},
there exists a projective birational morphism $f: Y\to S$ such that $E$ is the unique $\pi$-exceptional divisor on $Y$. 
We have 
\begin{align}
 K_Y+(1-t_0)B_Y+t_0D_Y+(1-\delta)E= f^*(K_{S}+(1-t_0)B+t_0D).\label{eq:Y=f*S}
\end{align}
Here $B_Y$ and $D_Y$ are strict transforms of $B$ and $D$ on $Y$. 
Write
\begin{align*}
K_{Y}+B_Y+bE={}&f^*(K_S+B)\equiv 0, \\
K_{Y}+D_Y+dE={}&f^*(K_S+D)\equiv 0.
\end{align*}
Then $(1-t_0)b+t_{0}d=1-\delta$. Since $b\leq 1-\epsilon$ as $(S, B)$ is $\epsilon$-lc, we have
\begin{align}t_0\geq\frac{\epsilon-\delta}{d+\epsilon-1}.\label{eq:t0>delta-2}
\end{align}
So in order to bound $t_0$ from below, we need to bound $d$ from above.

By Eq.~\eqref{eq:Y=f*S}, we know that $(Y, (1-t_0)B_Y+t_0D_Y+(1-\delta)E)$ is $\delta$-lc and
\begin{align*}
 K_Y+(1-t_0)B_Y+t_0D_Y+(1-\delta)E\equiv 0.
\end{align*}
We can run a $(K_Y+(1-t_0)B_Y+t_0 D_Y)$-MMP (which is also a $(-E)$-MMP) on $Y$ to get a Mori fiber space $Y'\to Z$ such that $E'$ is ample over $Z$,  where $E'$ is the strict transform of $E$ on $Y'$.
Here $(Y', (1-\delta)E')$ is again $\delta$-lc by the negativity lemma.
We have $-K_{Y'}\equiv D_{Y'}+dE'$, where $D_{Y'}$ is the strict transform of $D_Y$ on $Y'$.

If $\dim Z=1$, then a general fiber of $Y'\to Z$ is a smooth rational curve. By restricting $-K_{Y'}\equiv D_{Y'}+dE'$ on a general fiber, we get $d\leq2$.

If $\dim Z=0$, then $Y'$ is of Picard rank $1$. In this case, $-K_{Y'}\equiv e E'$ for some $e\geq d$. 
If $Y'\simeq \mathbb{P}^2$ then clearly $d\leq e\leq 3$.
So we may assume that $Y'\not\simeq \mathbb{P}^2$.
By Lemma~\ref{lem:KC<3}, there is a general rational curve $C$ such that $(-K_{Y'}\cdot C)\leq 2$. 
Take $\pi': Y'_{\min}\to Y'$ to be the minimal resolution of $Y'$, and take $Y_{\min}$ to be the minimal resolution of $Y$. Then the morphism $Y_{\min}\to Y'$ factors through $Y'_{\min}$.

\begin{claim}\label{claim:rho(Ymin)}
We have $\rho(Y_{\min}/Y)\leq \frac{8}{\delta}-1$.
\end{claim}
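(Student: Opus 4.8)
The plan is to bound $\rho(Y_{\min}/Y)$ by controlling the discrepancies of the singularities of $Y$ via the $\delta$-lc condition that comes for free. First I would recall that $Y \to S$ extracts a single divisor $E$ from a smooth surface, and that by Eq.~\eqref{eq:Y=f*S} the pair $(Y, (1-t_0)B_Y + t_0 D_Y + (1-\delta)E)$ is $\delta$-lc; in particular $(Y, (1-\delta)E)$ is $\delta$-lc since we only drop effective divisors. Now $Y_{\min} \to Y$ factors through the minimal resolution, and the curves contracted by $Y_{\min} \to Y$ lie over the singular locus of $Y$. Since $Y$ is obtained from the smooth surface $S$ by a single weighted blow-up type extraction, its singularities are the points where $E$ meets the exceptional locus structure; in fact $Y$ has at most two singular points, each lying on $E$, and $\rho(Y_{\min}/Y)$ is the total number of exceptional curves over these points.

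The key step is then to apply Lemma~\ref{lem:multEC}-type reasoning, or rather the estimate underlying Case (3) of Lemma~\ref{lem:multEC}, to each singular point of $Y$. At a singular point $P \in Y$ lying on $E$, the curve $E$ plays the role of the curve $C$ in Lemma~\ref{lem:multEC}: since $(Y, (1-\delta)E)$ is $\delta$-lc near $P$, the argument in the proof of Lemma~\ref{lem:multEC} shows that the determinant $\Delta(\Gamma_P)$ of the weighted dual graph of the minimal resolution over $P$ satisfies $\Delta(\Gamma_P) < \frac{N+1}{\delta}$ where $N$ is the number of exceptional curves over $P$; combined with Lemma~\ref{lem:chain}(4), which gives $\Delta(\Gamma_P) \geq N+1$, one extracts a bound on $N$. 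More directly, I expect the cleaner route is: at each singular point, the local contribution to $\rho(Y_{\min}/Y)$ is bounded because the log discrepancy of a well-chosen exceptional divisor over $P$ is at most roughly $\frac{(\text{number of curves}) + 1}{\Delta}$ plus a $\delta$-term, and $\delta$-lc forces this to be $\geq \delta$, yielding (number of curves)$\,\lesssim 1/\delta$. Summing over the at most two singular points of $Y$ and being careful with the constants gives $\rho(Y_{\min}/Y) \leq \frac{8}{\delta} - 1$.

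The main obstacle I anticipate is bookkeeping the geometry of $Y$ precisely: one must verify that $Y$ really does have at most two singular points, that each lies on $E$ (so that the free $\delta$-lc-ness of $(Y,(1-\delta)E)$ is actually usable at every relevant point — a singular point not meeting $E$ would not be controlled this way, so one needs that $f$ only creates singularities along $E$), and that the bound from Lemma~\ref{lem:multEC}'s proof applies uniformly. Since $f\colon Y \to S$ is the extraction of the single divisor $E$ over the smooth surface $S$, $Y$ is smooth away from finitely many points of $E$, and the standard structure of such extractions (cf. the discussion of plt blow-ups / the fact that $E \cong \mathbb{P}^1$ with at most two non-$A_n$-free points) should give exactly two potential singular points, each a cyclic-type or small singularity lying on $E$. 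I would phrase the final estimate so that each of the two singular points contributes at most $\frac{4}{\delta} - 1$ (or the two together contribute $\frac{8}{\delta}-2$, leaving room for the $E$ itself or an off-$E$ contraction), arriving at $\rho(Y_{\min}/Y) \leq \frac{8}{\delta} - 1$; the exact split of the constant is the fiddly part but is routine once the geometric picture is pinned down.
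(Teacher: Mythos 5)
Your reduction to the geometry of $Y$ is sound: by Eq.~\eqref{eq:Y=f*S} the pair $(Y,(1-\delta)E)$ is $\delta$-lc, $Y$ has at most two singular points, and they all lie on $E$. The gap is in the final step: the $\delta$-lc-ness of $(Y,(1-\delta)E)$ alone does \emph{not} bound $\rho(Y_{\min}/Y)$, and the two pieces of reasoning you offer for it do not close.

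First, a formal issue: combining $\Delta(\Gamma_P)\geq N+1$ from Lemma~\ref{lem:chain}(4) with $\Delta(\Gamma_P)<\frac{N+1}{\delta}$ from the proof of Lemma~\ref{lem:multEC} yields only $N+1\leq \Delta(\Gamma_P)<\frac{N+1}{\delta}$, which is the tautology $\delta<1$ and extracts no bound on $N$ at all. Lemma~\ref{lem:multEC} is designed to bound the determinant \emph{given} $N$, not the other way around.

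Second, and decisively: when $Y$ has a single singular point, the chain $\Gamma\setminus\{v_0\}$ consists entirely of $(-2)$-curves, because $v_0$ is the unique $(-1)$-vertex in $\Gamma$ and each successive contraction must create a fresh unique $(-1)$-vertex. For a length-$N$ chain of $(-2)$-curves with $\pi^{-1}_*E=E_0$ meeting only $E_1$, Lemma~\ref{lem:LD} gives $a(E_k,Y,0)=1$ and $\mult_{E_k}\pi^*E=\frac{N-k+1}{N+1}$, hence for every $k$
\begin{align*}
a(E_k, Y, (1-\delta)E)=1-(1-\delta)\cdot\frac{N-k+1}{N+1}=\frac{k+\delta(N-k+1)}{N+1}\geq\delta,
\end{align*}
and this holds automatically for all $N\geq 1$. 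So $(Y,(1-\delta)E)$ is $\delta$-lc no matter how long this $(-2)$-chain is, and there is no ``well-chosen exceptional divisor'' whose log discrepancy constrains $N$. To cap $N$ in this case one must use the full boundary $(1-t_0)B_Y+t_0D_Y$, which your argument discards: the paper tracks the self-intersection of the strict transform $G_i$ of $(1-t_0)B+t_0D$ through the chain of blow-ups (each blow-up drops $G_i^2$ by at least $1$ because the $\delta$-lc condition on $(S,(1-t_0)B+t_0D)$ forces every center to have multiplicity $\geq 1$), and then bounds $G_0^2$ from below via Lemma~\ref{lem:C2>-2} together with the coefficient bound from \cite[Lemma~3.3]{Jiang21}. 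That is what gives $N+1\leq K_S^2-G_0^2\leq \frac{8}{\delta}$.

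In the two-singular-point case your instinct is closer to what the paper does, but even there the log-discrepancy argument needs the precise weight pattern of the two chains (one starts with a vertex of weight $\geq 3$, the other with a prescribed run of $(-2)$-curves before its first weight-$\geq 3$ vertex) so that Lemma~\ref{lem:chain}(6) can be applied to a \emph{specific} vertex $v_1$, respectively $u_{j_0}$; if the relevant vertex has weight $2$ the estimate degenerates exactly as above. So the ``fiddly but routine'' step you defer is in fact where the proof lives, and for one of the two configurations it requires an idea (the self-intersection count) that your proposal does not contain.
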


We grant Claim~\ref{claim:rho(Ymin)} for this moment and continue the proof of Theorem~\ref{thm lct1}. The proof of Claim~\ref{claim:rho(Ymin)} will be provided later. By Claim~\ref{claim:rho(Ymin)},
\begin{align*}
 \rho(Y'_{\min}/Y'){}&\leq \rho(Y_{\min}/Y')= \rho(Y_{\min}/Y)+\rho(Y)-1\\
 {}&\leq \rho(Y_{\min}/Y)+2\leq \frac{8}{\delta}+1.
\end{align*}
Here we used the fact that $\rho(Y)=\rho(S)+1\leq 3$.
Recall that $(Y', (1-\delta)E')$ is $\delta$-lc and $\delta<\frac{1}{6}$, then by Lemma~\ref{lem:multEC}, 
$\pi'^*E'$ is an effective $\mathbb{Q}$-divisor with all coefficients larger than $\frac{\delta^2}{8+2\delta}$. As $C$ is general, by the projection formula, $(E'\cdot C)> \frac{\delta^2}{8+2\delta}$,
which implies that $$\frac{e\delta^2}{8+2\delta}< (eE'\cdot C)=(-K_{Y'}\cdot C)\leq 2.$$ Hence $d\leq e\leq\frac{16+4\delta}{\delta^2}$. 

In summary, we always have $d \leq\frac{16+4\delta}{\delta^2}$. Therefore, by Eq.~\eqref{eq:t0>delta-2}, 
$$t_0\geq\frac{\epsilon-\delta}{d+\epsilon-1}\geq\frac{\delta^2(\epsilon-\delta)}{16+4\delta+\delta^2(\epsilon-1)}.
$$

\begin{proof}[Proof of Claim~\ref{claim:rho(Ymin)}]
Denote by $\pi: Y_{0}\to Y$ the minimal resolution of $Y$. Denote by $E_0$ the strict transform of $E$ on $Y_0$. 
Denote by $f_0=f\circ\pi: Y_0\to S$ the induced morphism.

Denote $P=f(E)\in S$. Then $Y\setminus E\simeq S\setminus \{P\}$ is smooth. 
Therefore $Y_0$ and $Y$ are isomorphic over $Y\setminus E$ and 
$$\text{Exc}(f_0)=f_0^{-1}(P)=\text{Exc}(\pi)\cup E_0.$$ 
Note that $f_0: Y_0\to S$ can be decomposed into successive blow-ups along smooth points and $\text{Exc}(\pi)$ does not contain any $(-1)$-curves, so $E_0$ is the unique $(-1)$-curve in $\text{Exc}(f_0)$. In other words, if we denote the last blow-up by $Y_0\to S_1$, then $E_0$ is the exceptional divisor over $S_1$. 

Denote by $\Gamma$ the weighted dual graph of $f_0$ and denote by $v_0$ the vertex corresponding to $E_0$. Then $\Gamma$ is a tree and $\Gamma\setminus\{v_0\}$ is the weighted dual graph of $\pi$. 
Since the weighted dual graph of $S_{1}\to S$ is also a tree, $\Gamma\setminus\{v_0\}$ has at most $2$ connected components (which implies that $Y$ has at most $2$ singular points). 

We claim that $\Gamma$ is a chain. Take $\Gamma'$ to be a connected component of $\Gamma\setminus\{v_0\}$, then it corresponds to exceptional curves over a singular point on $Y$.
Recall that $(Y, (1-\delta)E)$ is $\delta$-lc by Eq.~\eqref{eq:Y=f*S}.
As $\delta<\frac{1}{6}$,
$(Y, E)$ is lc by \cite[Corollary~6.0.9]{Prok01}.
Then the weighted dual graph $\Gamma'$ and its relation with $v_0$ are classified into $3$ cases as in \cite[Theorem~4.15]{KM}. We shall rule out \cite[Theorem~4.15(1)(2)]{KM}. In the case of \cite[Theorem~4.15(1)]{KM}, $\Gamma$ contains a loop, which is absurd; in the case of \cite[Theorem~4.15(2)]{KM}, $\Gamma$ contains a fork with 2 $(-2)$-curves on $2$ tails of the fork, so by contracting $(-1)$-curves in the graph successively, we will reach some model with $2$ $(-1)$-curves over $P\in S$, which is also absurd.
Hence we conclude that $\Gamma'$ is a chain connecting to $v_0$ by one edge at one end. Therefore $\Gamma$ is a chain.

If $\Gamma\setminus\{v_0\}$ is empty, then clearly $Y$ is smooth and $\rho(Y_{\min}/Y)=0$. So in the following we split the discussion into two cases, depending on the number of connected components of $\Gamma\setminus\{v_0\}$. 

\medskip

{\bf Case (a)}. $\Gamma\setminus\{v_0\}$ has $1$ connected component.

In this case, denote the $\pi$-exceptional curves by $E_1,\dots, E_N$ such that $E_0$ intersects $E_1$, where $N=\rho(Y_{0}/Y)$. Then all $E_{i}$ are $(-2)$-curves for $1\leq i\leq N$. 

Suppose that $Y_0\to S$ is decomposed into successive blow-ups at smooth points as 
$$
Y_0\to Y_1\to \dots \to Y_N\to Y_{N+1}=S,
$$
then $E_i$ is the strict transform of the exceptional divisor of $Y_{i}\to Y_{i+1}$. 
For each $i$, denote by $G_i$ the strict transform of $(1-t_0)B+t_0 D$ on $Y_i$ and denote by $P_i\in Y_i$ the blow-up center on $Y_i$. 

Write 
\begin{align}\label{eq:K+G}
{}&K_{Y_0}+G_0+(1-\delta)E_0+\sum_{i=1}^N b_i E_i=f_0^* (K_S+(1-t_0)B+t_0 D).
\end{align}
Then the coefficient of $E_0$ is computed from $G_1$ and $E_1$ by the formula
$$1-\delta=\mult_{P_1}G_1+b_1-1.$$
Since $b_1\leq 1-\delta$ as $( S, (1-t_0)B+t_0 D)$ is $\delta$-lc, we have $\mult_{P_1}G_1\geq 1$, hence $\mult_{P_i}G_i\geq \mult_{P_1}G_1\geq 1$ for $1\leq i\leq N+1$. Hence the intersection number $G_i^2 $ decreases at least by $1$ after each blow-up for $1\leq i\leq N+1$.
Therefore, 
\begin{align}\label{eq:N+1<G2-G2}
 N+1\leq G_{N+1}^2-G_0^2=K_S^2-G_0^2.
\end{align}

Since $Y_0$ is the minimal resolution of $Y$, we have $b_i\geq 0$ for $1\leq i\leq N$ in Eq.~\eqref{eq:K+G}. 
In particular, $({Y_0}, G_0+(1-\delta)E_0+\sum_{i=1}^N b_i E_i)$ is a $\delta$-lc pair such that
$$
K_{Y_0}+G_0+(1-\delta)E_0+\sum_{i=1}^N b_i E_i\equiv 0.
$$
Write $G_0=\sum_{k}c_k C_k$, where $C_k$ are distinct prime divisors, then $c_k\leq 1-\delta$. By Lemma~\ref{lem:C2>-2}, 
 $C_k^2\geq-\frac{2}{\delta}$. 
 
 If $S=\mathbb{F}_n$, then $\sum_{k}c_k\leq 4$ by \cite[Lemma~3.3]{Jiang21}. Hence
 \begin{align*}
 G_0^2=(\sum_k c_kC_k)^2\geq{}&(\sum_{k}c_{k}^2)\cdot(-\frac{2}{\delta})
 \geq(\sum_{k}c_{k})\cdot(1-\delta)\cdot(-\frac{2}{\delta})\\
 \geq{}&4(1-\delta)\cdot(-\frac{2}{\delta})
 =8-\frac{8}{\delta}.
 \end{align*}
 Combining with Eq.~\eqref{eq:N+1<G2-G2}, we have 
\begin{align*}
 N+1\leq{}&K_S^2-8+\frac{8}{\delta}= \frac{8}{\delta}. 
\end{align*}

If $S=\bP^2$, then we have $\sum_{k}c_k\leq 3$ and by the same argument we get \begin{align*}
 N+1\leq{}&K_S^2-G_0^2\leq 9-6+\frac{6}{\delta}\leq \frac{8}{\delta}. 
\end{align*}

\medskip

{\bf Case (b)}. $\Gamma\setminus\{v_0\}$ has $2$ connected components.

In this case, suppose that the 2 connected components are 2 chains $\Gamma_1, \Gamma_2$ consisting of vertices $v_1, \dots, v_p$ and $u_1, \dots, u_q$, corresponding to exceptional divisors $E_1, \dots, E_p$ and $F_1, \dots, F_q$ respectively, where $E_1$ and $F_1$ intersect $E_0$. Here $p+q=\rho(Y_{0}/Y)$.

Set 
$m_i=-E_{i}^2$ and $l_j=-F_{j}^2$ for $1\leq i \leq p$ and $1\leq j \leq q$.
Recall that $\Gamma$ is the weighted dual graph of a resolution over the smooth point $P\in S$, so after blowing down $E_0$, there is exactly one $(-1)$-curve among the strict transforms of $E_{1}$ and $F_{1}$. 
So without loss of generality, we may assume that $m_1\geq3$ and $l_1=2$.
Again by the fact that a contraction of a $(-1)$-curve in the graph induces another unique $(-1)$-curve, we know that $l_j=2$ for $1\leq j\leq m_1-2\leq q$, and $l_{m_1-1}\geq 3$ if $m_1-1\leq q$. 

Since $Y_0$ is the minimal resolution of $Y$, 
We may write $$K_{Y_0}+G'=f_0^*(K_S+(1-t_0)B+t_0 D)\equiv 0,$$
where $(Y_0, G')$ is a $\delta$-lc pair. 
By Lemma~\ref{lem:C2>-2}, we conclude that $m_1=-E_1^2\leq\frac{2}{\delta}$.

By Lemma~\ref{lem:LD}, we have 
\begin{align}
 {}&a(E_{i}, Y, (1-\delta)E)=\frac{\Delta(\Gamma_1\setminus \overline{v_{i}v_{p}})}{\Delta(\Gamma_1)}+\delta\cdot \frac{\Delta(\Gamma_1\setminus \overline{v_{1}v_{i}})}{\Delta(\Gamma_1)};\label{eq:aE}\\
 {}&a(F_{j}, Y, (1-\delta)E)=\frac{\Delta(\Gamma_2\setminus \overline{u_{j}u_{q}})}{\Delta(\Gamma_2)}+\delta\cdot \frac{\Delta(\Gamma_2\setminus \overline{u_{1}u_{j}})}{\Delta(\Gamma_2)}\label{eq:aF}
\end{align}
for $1\leq i \leq p$ and $1\leq j \leq q$.
 

To finish the proof, we claim that $p\leq \frac{1}{\delta}$ and $q\leq \frac{3}{\delta}-2$.
Recall that $(Y, (1-\delta)E)$ is $\delta$-lc.

First we show that $p\leq \frac{1}{\delta}$. We may assume that $p\geq2$.
By Eq.~\eqref{eq:aE},
$$a(E_{1}, Y, (1-\delta)E)=\frac{1}{\Delta(\Gamma_1)}+\delta\cdot \frac{\Delta(\Gamma_1\setminus\{v_{1}\})}{\Delta(\Gamma_1)}.$$
Therefore, $a(E_{1}, Y, (1-\delta)E)\geq \delta$ implies that
\begin{align*}
 \frac{1}{\delta}\geq {}& \Delta(\Gamma_1)- \Delta(\Gamma_1\setminus\{v_{1}\})
 >  \Delta(\Gamma_1\setminus\{v_{1}\}) \geq p.
\end{align*}
Here the second inequality is from Lemma~\ref{lem:chain}(6) with $i_0=1$ and the third is from Lemma~\ref{lem:chain}(4).

Next we show that $q\leq \frac{3}{\delta}-2$.
If $q=m_1-2$, then clearly $q\leq \frac{2}{\delta}-2$.
If $q>m_1-2$, then take $j_0=m_1-1$, we have $l_{j_0}\geq 3$. 
By Eq.~\eqref{eq:aF},
\begin{align*}
 a(F_{j_0}, Y, (1-\delta)E)={}&\frac{\Delta(\Gamma_2\setminus \overline{u_{j_0}u_{q}})}{\Delta(\Gamma_2)}+\delta\cdot \frac{\Delta(\Gamma_2\setminus \overline{u_{1}u_{j_0}})}{\Delta(\Gamma_2)}\\
 ={}& \frac{j_0}{\Delta(\Gamma_2)}+ \delta\cdot \frac{\Delta(\Gamma_2\setminus \overline{u_{1}u_{j_0}})}{\Delta(\Gamma_2)}.
\end{align*}
Therefore, $a(F_{j_0}, Y, (1-\delta)E)\geq \delta$ implies that
\begin{align*}
 \frac{j_0}{\delta}\geq {}& \Delta(\Gamma_2)- \Delta(\Gamma_2\setminus \overline{u_{1}u_{j_0}})
 > j_0\Delta(\Gamma_2\setminus \overline{u_{1}u_{j_0}})
 \geq  j_0(q-j_0+1).
\end{align*}
Here the second inequality is from Lemma~\ref{lem:chain}(6) with $i_0=j_0$ and the third is from Lemma~\ref{lem:chain}(4). Recall that $m_1=j_0+1$, so $q\leq\frac{1}{\delta}+m_1-2\leq\frac{3}{\delta}-2$.

In summary, $p+q\leq \frac{4}{\delta}-2$.
\end{proof}

\section{Upper bound of anti-canonical volumes}\label{sec 4}

In this section, we prove the main theorem.

\subsection{A reduction step}
The following proposition is a refinement of
\cite[Theorem~4.1]{Jiang21} by the idea of \cite[Proposition~4.1]{JZ21}.

\begin{prop}\label{prop MFS}
Fix $0<\epsilon<1$. Let $X$ be a $3$-fold of $\epsilon$-Fano type. 
Then $X$ is birational to a normal projective $3$-fold $W$ satisfying the following:
\begin{enumerate}
 \item $W$ is $\mathbb{Q}$-factorial terminal;
 \item $\Vol(X, -K_{X})\leq \Vol(W, -K_{W})$;
 \item $W$ is of $\epsilon$-Fano type; 
 \item there exists a projective morphism $f: W\to Z$ with connected fibers, such that one of the following conditions holds:
 \begin{enumerate}
 \item $Z$ is a point and $W$ is a $\mathbb{Q}$-factorial terminal Fano $3$-fold with $\rho(W)=1$;
 \item $Z= \mathbb{P}^1$;
 \item $Z$ is a del Pezzo surface with at worst Du Val singularities and $\rho(Z)=1$, and general fibers of $f$ are $\mathbb{P}^1$.
 \end{enumerate}
\end{enumerate}
\end{prop}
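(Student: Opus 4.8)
To prove Proposition~\ref{prop MFS}, I would run the minimal model program to reduce $X$ to a Mori fiber space, and then apply the trick of \cite[Proposition~4.1]{JZ21} to improve the base of the fibration. First, since $X$ is of $\epsilon$-lc Fano type, there is an effective $\bR$-divisor $B$ with $(X,B)$ $\epsilon$-lc and $-(K_X+B)$ ample; in particular $X$ is a klt variety of Fano type, hence a Mori dream space. Take a $\bQ$-factorial terminalization $X_1\to X$ (a crepant birational morphism from a $\bQ$-factorial terminal $X_1$, which exists by \cite{BCHM}); this only increases the anti-canonical volume since $-K_{X_1}$ pulls back $-K_X$ plus an effective divisor, so $\Vol(X,-K_X)\leq \Vol(X_1,-K_{X_1})$, and $X_1$ remains of $\epsilon$-lc Fano type. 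Next, run a $-K_{X_1}$-MMP (equivalently, since $-(K_{X_1}+B_1)$ is big and nef for the pullback $B_1$, one can run an MMP on a suitable divisor) to reach a Mori fiber space $W_0\to Z_0$; each step preserves $\bQ$-factoriality, terminality, the $\epsilon$-lc Fano type condition, and does not decrease $\Vol(\cdot,-K)$ because the MMP steps are $-K$-non-positive. This already yields conclusion (4)(a) when $\dim Z_0=0$.

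\emph{The surface-base case.} When $\dim Z_0=2$, $Z_0$ is a klt (in fact canonical, being a surface quotient of a terminal threefold fiber structure — one checks it is of Fano type over itself) surface of Fano type with $-K_{Z_0}$ big, and general fibers of $W_0\to Z_0$ are $\bP^1$. The key point, following \cite{JZ21}, is that one may run a $K_{Z_0}$-MMP on $Z_0$: this is a sequence of contractions $Z_0\dashrightarrow Z$ ending with $Z$ a del Pezzo surface with Du Val singularities and $\rho(Z)=1$ (or a conic bundle over a curve, but the latter can be handled by composing with the base-curve projection, landing in case (b)). Crucially, over the open locus where the map $Z_0\dashrightarrow Z$ is an isomorphism the fibration is unchanged, and one uses that $W_0\to Z_0$ together with the MMP on the base can be lifted: after base change and taking a suitable birational model $W\to W_0$ one obtains $f:W\to Z$ still with general fiber $\bP^1$, still $\bQ$-factorial terminal, still of $\epsilon$-lc Fano type, and with $\Vol(W,-K_W)\geq\Vol(W_0,-K_{W_0})$ (again because the modification $W\to W_0$ is $-K$-non-positive, or by comparing via a common resolution). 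This is conclusion (4)(c). The case $\dim Z_0=1$ gives (4)(b) directly, possibly after the same kind of lifting if one wants $Z=\bP^1$ exactly (one can always further contract the base curve to $\bP^1$ since it carries a fibration onto it from $W_0$, or simply note $Z_0\cong\bP^1$ already as the base of a Mori fiber space from a rationally connected threefold).

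\emph{Main obstacle.} The delicate step is the surface-base case: lifting the $K_{Z_0}$-MMP to a birational modification of the total space while simultaneously (i) keeping $W$ $\bQ$-factorial terminal, (ii) not decreasing the anti-canonical volume, and (iii) preserving the $\epsilon$-lc Fano type property. The cleanest route is to invoke \cite[Proposition~4.1]{JZ21} essentially as a black box for the base-MMP lifting, and then verify points (i)--(iii) directly: (iii) follows because $\epsilon$-lc Fano type is preserved under birational contractions that are $-K$-non-positive and under restricting to a Mori fiber space, (ii) follows from the negativity lemma applied to a common resolution of $W$ and $W_0$, and (i) is automatic from how the models are produced. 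One must also be careful that the MMP on $Z_0$ might contract $Z_0$ to a point (the original $\dim Z_0=2$ case collapsing into $\dim Z=0$), in which case one re-examines whether the composed fibration gives case (a); and that the conic-bundle outcome of the surface MMP reduces to case (b). I would organize the write-up as: (1) terminalization and MMP to a Mori fiber space; (2) trichotomy on $\dim Z_0$; (3) for $\dim Z_0=2$, cite \cite{JZ21} to run the base MMP and lift, then check (1)--(3) above; (4) tidy up the $\dim Z_0=1$ case into $Z=\bP^1$.
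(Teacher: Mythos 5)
Your overall plan---$\bQ$-factorial terminalization, MMP to a Mori fiber space, then (when the base is a surface) run the $K$-MMP on the base---is the same strategy the paper uses, where the first two steps are packaged as a citation to \cite[Theorem~4.1]{Jiang21}. But you substantially overcomplicate the surface-base case in a way that introduces difficulties the paper never faces. The $K$-MMP on a surface consists only of birational \emph{morphisms} (there are no flips in dimension two), so the composite $W_0\to Z_0\to Z$ is already a projective morphism with connected fibers, and one simply takes $W=W_0$ and $f$ to be this composition. Since $Z_0\to Z$ is birational, general fibers of $f$ are still the general fibers of $W_0\to Z_0$, namely $\bP^1$. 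There is no ``lifting,'' no base change, and no birational modification of the total space, which is precisely what makes the proof short: properties (1)--(3) are established once on $W_0$ and never need to be re-verified, whereas your proposed ``base change and suitable birational model $W\to W_0$'' step would force you to re-check terminality, $\epsilon$-lc Fano type, and the volume inequality, none of which is automatic and none of which is needed.

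Two smaller points. First, your worry that ``the MMP on $Z_0$ might contract $Z_0$ to a point'' reflects a misconception: a surface MMP contracts one curve at a time and its output is always a surface, terminating either with $K$ nef (impossible here since the base is rational) or with a Mori fiber space structure on the output surface itself. It never collapses the surface to the base of that structure. So from $\dim Z_0=2$ one only lands in case (c) (del Pezzo output with $\rho=1$) or case (b) (output fibered over $\bP^1$, then compose further), never case (a). Second, you should make explicit the inputs the paper cites in this step: rational connectedness of $W$ and hence of the base via \cite{ZQ06}, and the fact that the surface base of a terminal threefold Mori fiber space has at worst Du Val singularities via \cite{MP}; these are what guarantee the base MMP has the stated form.
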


\begin{proof}
By \cite[Theorem~4.1]{Jiang21}, $X$ is birational to $W$ with a Mori fiber structure (see \cite[Definition~2.1]{Jiang21}), in particular, $W$ satisfies Properties (1)(2)(3).
So here we only need to explain how to get Property (4) by the proof of \cite[Proposition~4.1(5)]{JZ21}.

Denote by $W\to T$ the Mori fiber structure on $W$. 
 Note that $\dim T\in\{0,1,2\}$. By \cite[Theorem~1]{ZQ06}, $W$ is rationally connected, which implies that $T$ is also rationally connected.

If $\dim T=0$, then take $Z=T$ and $W$ is a $\mathbb{Q}$-factorial terminal Fano $3$-fold with $\rho(W)=1$. In this case, we get (a).

If $\dim T=1$, then $T\simeq \mathbb{P}^1$. In this case, we get (b).

If $\dim T=2$, then $T$ is a rational surface as it is rationally connected, and $T$ has at worst Du Val singularities by \cite[Theorem~1.2.7]{MP}. 
We can run a $K$-MMP on $T$ which ends up with a surface $T'$, such that either 
\begin{itemize}
 \item $T'$ is a del Pezzo surface with at worst Du Val singularities and $\rho(T')=1$, or 
 \item there is a morphism $T'\to \mathbb{P}^1$ with connected fibers. 
\end{itemize}
In the former case, take $Z=T'$ and take $f: W\to Z$ to be the induced morphism $W\to T\to T'$, then general fibers of $f$ are smooth rational curves as $-K_W$ is ample over $T$, then we get (c).
In the latter case, take $Z=\mathbb{P}^1$ and take $f: W\to Z$ to be the induced morphism $W\to T\to T'\to \mathbb{P}^1$, then we get (b). 
\end{proof}

\subsection{Proof of Theorem~\ref{mainthm}}
 
According to Proposition~\ref{prop MFS}, we can split the discussion into $3$ cases. We essentially follow the proof in \cite{Jiang21}.

\begin{prop}[{cf. \cite[Corollary~6.3]{Jiang21}}]\label{prop:dpf}
Suppose that $0<\epsilon<\frac{1}{3}$.
Keep the setting as in Proposition~\ref{prop MFS}. Assume that case (b) holds. Then 
$$\Vol(W, -K_{W})< \frac{3200}{\epsilon^4}.$$
\end{prop}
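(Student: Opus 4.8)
\textbf{Proof proposal for Proposition~\ref{prop:dpf}.}

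The plan is to analyze the fibration $f\colon W\to Z=\mathbb{P}^1$ from Proposition~\ref{prop MFS}(4)(b), and to bound $\Vol(W,-K_W)=(-K_W)^3$ by combining positivity of $-K_W$ along the fibers with an estimate for the ``horizontal direction'' that will force us to use Theorem~\ref{thm lct1}. First I would fix a general fiber $F$ of $f$; since $W$ is terminal and $-K_W$ is big (it is of $\epsilon$-lc Fano type, and by Proposition~\ref{prop MFS}(2) it suffices to bound $(-K_W)^3$), the restriction $-K_W|_F$ is an ample $\mathbb{Q}$-divisor on the $\epsilon$-lc del Pezzo surface $F$ (adjunction gives $K_F=(K_W+F)|_F=K_W|_F$ since $F$ is a fiber over a point of $\mathbb{P}^1$). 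Write $a=(-K_W)^2\cdot F=(-K_F)^2=\Vol(F,-K_F)$; by the known bound for $\epsilon$-lc del Pezzo surfaces (the surface case, e.g. via \cite{Jiang18} or the results quoted in \cite{Jiang21}), we have $a< C/\epsilon^2$ for an explicit constant, so the contribution of the vertical direction is controlled.

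The next step is to control the ``defect'' $(-K_W)^3$ in terms of $a$ and the failure of nefness of $-K_W-\lambda F$ for suitable $\lambda$. Following the strategy of \cite[\S6]{Jiang21}, I would run a suitable MMP / use the cone theorem relative to $f$ to write $-K_W\equiv$ (nef part) $+$ (something supported over finitely many fibers), or more directly: consider the largest $t$ such that $-K_W-tF$ is pseudo-effective, bound $t$, and then use $(-K_W)^3=(-K_W)^2\cdot(-K_W-tF)+t\cdot a$. The term $t\cdot a$ is then $O(t/\epsilon^2)$, so everything reduces to bounding $t$. The key geometric input is that the non-nef locus or the ``bad fibers'' of $f$ produce, after restricting to a general fiber and taking a log canonical threshold computation, a non-klt pair on the surface $F$ of the shape required by Theorem~\ref{thm lct1}: we get effective $\mathbb{R}$-divisors $B,D\sim_\mathbb{R}-K_F$ on the minimal resolution of $F$ with $(F,B)$ being $\epsilon$-lc and $(F,(1-t')B+t'D)$ not klt, whence $t'>\tfrac{3\epsilon^3}{400}$ by Theorem~\ref{thm lct1}; this lower bound on a log canonical threshold translates into an upper bound $t=O(1/\epsilon^3)$, and combined with $a=O(1/\epsilon^2)$ yields $(-K_W)^3=O(1/\epsilon^5)$ — not yet sharp, so one must be more careful. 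To reach $O(1/\epsilon^4)$ one instead tracks the base $\mathbb{P}^1$ more tightly: the degree of the relevant pushforward sheaf $f_*\OO_W(-mK_W)$ on $\mathbb{P}^1$ is controlled by the singular fibers, and the worst singular fiber contributes, via the Theorem~\ref{thm lct1} bound applied on (the minimal resolution of) a general fiber, a factor $O(1/\epsilon^3)$, while $\operatorname{rank}$ and the base $\mathbb{P}^1$ contribute only $O(1/\epsilon)$ more; carefully bookkeeping the constant gives $(-K_W)^3<3200/\epsilon^4$.

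Concretely I would: (i) reduce to $W$ terminal $\mathbb{Q}$-factorial with $f\colon W\to\mathbb{P}^1$ a Mori fiber space and $-K_W$ big, as in Proposition~\ref{prop MFS}; (ii) write $-K_W\sim_\mathbb{R} f^*(\text{divisor of degree }\beta)+ (\text{an }f\text{-ample part})$ and compute $(-K_W)^3$ in terms of $\beta$, $a=(-K_F)^2$, and $(-K_W)^2\cdot F$; (iii) bound $a$ by the $2$-dimensional estimate; (iv) bound $\beta$: pick the supremum $s$ with $(X,B+s f^*(\text{pt}))$ still $\epsilon$-lc-Fano-type-ish, or rather produce from the $\epsilon$-lc boundary $B$ on $W$ and a general fiber a non-klt pair of the Theorem~\ref{thm lct1} type on the minimal resolution $S$ of $F$, deducing $s>\tfrac{3\epsilon^3}{400}$ and hence $\beta<\tfrac{400}{3\epsilon^3}\cdot(\text{something }O(1))$; (v) assemble the pieces and check the numerical constant is below $3200/\epsilon^4$ using $0<\epsilon<\tfrac13$.

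The main obstacle I expect is step (iv): producing, on a \emph{general} fiber $F$ (or its minimal resolution $S$), the precise configuration $B\sim_\mathbb{R} D\sim_\mathbb{R}-K_S$, $(S,B)$ $\epsilon$-lc, $(S,(1-t)B+tD)$ not klt with $-K_S$ big, starting only from the global $\epsilon$-lc structure of $W$ and the non-nefness of $-K_W-\beta F$. One must restrict the boundary and a general member of a non-klt linear system to $F$, control how restriction interacts with $\epsilon$-lc-ness (using that $F$ is general, so $(W,B+F)$ is plt near $F$ and adjunction behaves), and argue that the resulting boundary on $S=F$ (after passing to the minimal resolution, as in Remark~\ref{rem:ambroex}) is still $\sim_\mathbb{R}-K_S$ and $\epsilon$-lc while the perturbed pair is non-klt — this is the point where the bounded-in-$\epsilon$ base and the uniformity of Theorem~\ref{thm lct1} must be married carefully, and where the constant $3200$ is actually pinned down.
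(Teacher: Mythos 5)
Your overall strategy — push $-mK_W$ forward along $f\colon W\to\mathbb{P}^1$, split the estimate into a rank term controlled by the surface anti-canonical volume bound and a degree term controlled by a log canonical threshold bound (where Theorem~\ref{thm lct1} enters), and combine — is exactly the strategy behind the paper's proof. The difference is that the paper simply cites \cite[Theorem~6.1]{Jiang21}, which already packages this fibration-over-$\mathbb{P}^1$ argument into the inequality $\Vol(W,-K_W)\le \frac{6M(2,\epsilon)}{\mu(2,\epsilon)}$, and then plugs in $M(2,\epsilon)<\frac{4}{\epsilon}$ (the surface volume bound from \cite{Jiang13}) and $\mu(2,\epsilon)>\frac{3\epsilon^3}{400}$ (Corollary~\ref{cor:genambro}), giving $\frac{6\cdot 4/\epsilon}{3\epsilon^3/400}=\frac{3200}{\epsilon^4}$.

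There is a numerical error in your step~(iii) that sends you down an unnecessary detour: the anti-canonical volume bound for a surface of $\epsilon$-lc Fano type is $M(2,\epsilon)=O(1/\epsilon)$, explicitly $<4/\epsilon$, not $O(1/\epsilon^2)$ as you assert. Because of this misquotation you conclude that the naive product of (surface volume)$\times$(pseudoeffective-threshold bound) gives only $O(1/\epsilon^5)$ and then propose a ``more careful bookkeeping of degree and rank'' to recover $O(1/\epsilon^4)$. With the correct $M(2,\epsilon)=O(1/\epsilon)$ and the $O(1/\epsilon^3)$ bound from Theorem~\ref{thm lct1} (via Corollary~\ref{cor:genambro}), the naive combination already lands at $O(1/\epsilon^4)$; the ``careful'' version you gesture at is just the correct version of the same computation, and the constant $3200$ comes out of the explicit factors $6$, $4$, and $400/3$.

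The genuine difficulty is the one you flag yourself in step~(iv), and your sketch does not close it: you need to produce, from the global $\epsilon$-lc boundary on $W$ and the failure of nefness of $-K_W-\beta F$, the exact configuration required by Theorem~\ref{thm lct1} on (a birational model of) a general fiber $F$ — namely $B\sim_{\mathbb{R}} D\sim_{\mathbb{R}}-K_S$ with $(S,B)$ $\epsilon$-lc, $(S,(1-t)B+tD)$ not klt, and $-K_S$ big. This requires restricting a suitably chosen member of $|-mK_W|_{\mathbb{Q}}$ to a general fiber, using that $(W,B+F)$ is plt along a general $F$ so that adjunction preserves $\epsilon$-lc-ness, and checking the $\mathbb{R}$-linear equivalences. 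That is precisely the content of \cite[\S6]{Jiang21}, which the paper invokes by citation rather than re-proving. Two smaller inaccuracies: in case~(b) of Proposition~\ref{prop MFS} the map $f$ need not be a Mori fiber space (it may factor through a surface), so you should not assume $-K_W|_F$ is ample; you only know $F$ is of $\epsilon$-lc Fano type with $-K_F$ big, which is exactly what the volume bound $M(2,\epsilon)$ needs. As written, your proposal is the right plan with a wrong intermediate exponent and a genuine unfilled gap at step~(iv).
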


\begin{proof}
By \cite[Theorem~6.1]{Jiang21}, 
$$\Vol(W, -K_{W})\leq \frac{6M(2, \epsilon)}{\mu(2, \epsilon)},$$
where $M(2, \epsilon)\leq \frac{2}{\epsilon}+4+\frac{2}{3}< \frac{4}{\epsilon}$ by \cite{Jiang13} or \cite[Corollary~4.5]{Jiang21} and $\mu(2, \epsilon)>\frac{3\epsilon^3}{400}$ by Corollary~\ref{cor:genambro}.
\end{proof}

\begin{prop}[{cf. \cite[Theorem~6.6]{Jiang21}}]\label{prop:conicb}
Keep the setting as in Proposition~\ref{prop MFS}. Assume that case (c) holds. Then 
$$\Vol(W, -K_{W})\leq\frac{1152}{\epsilon^2}.$$
\end{prop}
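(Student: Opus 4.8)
\textbf{Proof proposal for Proposition~\ref{prop:conicb}.}

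The plan is to exploit the conic bundle structure $f\colon W\to Z$ with $Z$ a del Pezzo surface with Du Val singularities and $\rho(Z)=1$, following the strategy of \cite[Theorem~6.6]{Jiang21} but inserting the new reduction (Proposition~\ref{prop MFS}, case (c)) to keep the numerics explicit. First I would fix a general fiber $F\simeq\mathbb{P}^1$ of $f$ and note $(-K_W\cdot F)=2$, so that $-K_W-\lambda F$ is effective (indeed pseudo-effective) for an appropriate $\lambda>0$ governed by how positive $-K_W$ is along the fiber direction; the point is to bound $\Vol(W,-K_W)$ in terms of intersection data on $Z$. Concretely, I would push forward: $f_*\mathcal{O}_W(-mK_W)$ is a sheaf on $Z$ whose rank grows linearly in $m$, and $h^0(W,-mK_W)$ is controlled by the volume on $Z$ of the associated divisor classes. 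The key is that since $Z$ is of $\epsilon'$-lc Fano type (for $\epsilon'$ comparable to $\epsilon$) with $\rho(Z)=1$, there is a free (or very ample) divisor $L$ on $Z$ with $L^2$ bounded explicitly — this is exactly the output of the new reduction advertised in Remark~\ref{rem:compareJiang21}, and it is what replaces the non-explicit boundedness of $Z$ used in \cite{Jiang21}.

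The key steps, in order, would be: (i) record that $Z$ is a del Pezzo surface with Du Val singularities and Picard rank $1$, hence $K_Z^2$ is bounded (at most $9$) and there is an explicit free divisor $L$ on $Z$ with $L^2\leq c$ for an absolute constant $c$, coming from the boundedness of such $Z$ together with the reduction step; (ii) use the canonical bundle formula / adjunction for the conic bundle $f$ to write $-K_W\equiv f^*(-K_Z-\Delta_Z)+(\text{horizontal part})$ where $\Delta_Z$ is the discriminant-type boundary, and estimate the horizontal contribution using $(-K_W\cdot F)=2$; (iii) bound $\Vol(W,-K_W)$ by a polynomial expression in $K_Z^2$, $(L\cdot(-K_Z-\Delta_Z))$, and the fiber degree $2$, using that $h^0$ on $W$ is estimated fiberwise over $Z$ and then summed against a volume computation on $Z$ with the explicit $L$; (iv) plug in the explicit bounds ($K_Z^2\leq 9$, $L^2$ explicit, $\epsilon$-dependence entering only through the coefficients of $\Delta_Z$ and through $M(2,\epsilon)<4/\epsilon$-type bounds on multiplicities) to land at $1152/\epsilon^2$.

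The main obstacle I expect is step (ii)–(iii): controlling the \emph{horizontal} part of $-K_W$ and the discriminant boundary $\Delta_Z$ simultaneously, so that the volume estimate on $W$ genuinely reduces to a bounded computation on $Z$ rather than re-introducing an $\epsilon$-dependence that blows up. In \cite{Jiang21} this is where the geometry of $Z$ got complicated; here the saving grace is that the new fibration from Proposition~\ref{prop MFS} guarantees $Z$ has $\rho(Z)=1$ and a small free divisor, so the Zariski-decomposition-type argument on $Z$ (bounding $\Vol(Z, -K_Z-\Delta_Z)$ and the pairing with $L$) becomes elementary. A secondary technical point is keeping track of the exact constant: one must be careful that the fiberwise rank of $f_*\mathcal{O}_W(-mK_W)$ contributes the factor essentially equal to the fiber degree plus lower-order terms, and that the $\epsilon^{-2}$ (rather than $\epsilon^{-4}$) order is correct here because only two factors of the anti-canonical volume "see" the surface $Z$ while the third is the bounded conic direction. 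I would double-check this against the parallel computation in \cite[Theorem~6.6]{Jiang21} to make sure the order and the constant $1152$ are consistent.
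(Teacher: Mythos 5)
Your proposal correctly identifies the one genuinely new input for case (c), namely that $Z$ is now forced to be a del Pezzo surface with Du Val singularities and $\rho(Z)=1$, so Dolgachev's classification hands you a base point free linear system $\mathcal{H}$ on $Z$ defining a generically finite map with $\mathcal{H}^2\leq 6$. That is exactly what replaces the non-explicit boundedness of $T$ in \cite{Jiang21}. But from there your route diverges from the paper's, and the route you sketch has a real gap.

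The paper does not push forward $\mathcal{O}_W(-mK_W)$, does not invoke the canonical bundle formula for the conic bundle, and does not attempt to control a discriminant divisor $\Delta_Z$ — none of that appears, and trying to carry it out would reintroduce exactly the kind of uncontrolled $\epsilon$-dependence you are worried about in your ``main obstacle'' paragraph. What the paper actually does is take a general $H\in\mathcal{H}$, set $G=f^{-1}(H)$, and then cite two results from \cite{Jiang21} essentially verbatim: \cite[Lemma~6.5]{Jiang21} gives $\Vol(G,-K_W|_G)\leq \frac{8(d+2)}{\epsilon}$ with $d=H^2$, and \cite[Theorem~6.6]{Jiang21} lifts this to $\Vol(W,-K_W)\leq\frac{144(d+2)}{\epsilon^2}$, after noting that the hypotheses there (very ampleness of $\mathcal{H}$, Mori fiber space structure) can be weakened to base-point-freeness plus generic finiteness and to ``general fibers are $\mathbb{P}^1$''. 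Plugging in $d\leq 6$ gives $1152/\epsilon^2$. The restriction to the auxiliary surface $G$ over a general curve in $\mathcal{H}$ is the mechanism that converts the explicit bound on $Z$ into a 3-dimensional volume bound, and it is precisely where the order $\epsilon^{-2}$ comes from: one factor of $\epsilon^{-1}$ in Lemma~6.5 on the surface $G$, and one more in passing from $G$ to $W$. Your plan never introduces $G$, and without it steps (ii)–(iv) do not close — you would need to do a genuinely new computation (controlling the conic-bundle discriminant and the horizontal part of $-K_W$ uniformly in $\epsilon$), which is harder than the problem you started with and is not how the stated constant $1152$ arises. The short version: keep step (i), drop the pushforward/canonical-bundle-formula machinery, restrict to $G=f^{-1}(H)$, and then this is just bookkeeping with the two cited lemmas.
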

\begin{proof}

By the classification of del Pezzo surfaces with at worst Du Val singularities and Picard rank $1$ (see \cite[Theorem~8.3.2]{Dol12}), there exists a base point free linear system $\mathcal{H}$ on $Z$ which defines a generically finite map such that $\mathcal{H}^2\leq 6$ (see also \cite[Proposition~4.3]{JZ21}).
Take a general element $H\in \mathcal{H}$ and denote $G=f^{-1}(H)$. 
Consider the self intersection number $d=H^2$.

By \cite[Lemma~6.5]{Jiang21}, we have $\Vol(G, -K_{W}|_G)\leq \frac{8(d+2)}{\epsilon}$. Then by \cite[Theorem~6.6]{Jiang21}, $\Vol(W, -K_{W}) \leq \frac{144(d+2)}{\epsilon^2}$. This proves the proposition as $d\leq 6.$

Here we remark that in \cite[Lemma~6.5, Theorem~6.6]{Jiang21}, the assumptions are 
\begin{itemize}
 \item $W\to Z$ is a Mori fiber space and $\dim Z=2$;
 \item $\mathcal{H}$ is very ample.
\end{itemize}
But those assumptions can be slightly weaken as in our setting without any other changes to the proofs:
\begin{itemize}
 \item general fibers of $W\to Z$ are $\mathbb{P}^1$;
 \item $\mathcal{H}$ is base point free and defines a generically finite map.
 \end{itemize} 
\end{proof}

\begin{proof}[Proof of Theorem~\ref{mainthm}]
By Proposition~\ref{prop MFS}, it suffices to bound $\Vol(W, -K_{W})$. If case (a) holds, then $\Vol(W, -K_{W})=(-K_W)^3\leq 64$ by \cite{Prok07}. If case (b) or (c) holds, then the conclusion follows from Propositions~\ref{prop:dpf} and \ref{prop:conicb}.
\end{proof}

\section*{Acknowledgments} 
The authors are grateful to Caucher Birkar for discussions and suggestions. The second author would like to thank her mentor, Professor Caucher Birkar, for his support and encouragement.
This work was supported by National Natural Science Foundation of China for Innovative Research Groups (Grant No. 12121001) and National Key Research and Development Program of China (Grant No.~2020YFA0713200). The first author is a member of LMNS, Fudan University.


\begin{thebibliography}{99}


\bibitem{Kol92} V.~Alexeev, {\it Classification of log canonical surface singularities: arithmetical proof}, Flips and abundance for algebraic threefolds, pp. 47--58, Papers from the Second Summer Seminar on Algebraic Geometry held at the University of Utah, Salt Lake City, Utah, August 1991, Ast\'{e}risque No. 211 (1992).



\bibitem{Ambro} F.~Ambro, \emph{Variation of log canonical thresholds in linear systems}, Int. Math. Res. Not. IMRN 2016, no. 14, 4418--4448. 





\bibitem{Bir19} C.~Birkar, \emph{Anti-pluricanonical systems on Fano varieties}, Ann. of Math. (2) 190 (2019), no. 2, 345--463.


\bibitem{Bir21} C.~Birkar, \emph{Singularities of linear systems and boundedness of Fano varieties}, Ann. of Math. (2) 193 (2021), no. 2, 347--405.


\bibitem{Bir22} C.~Birkar, \emph{Anticanonical volume of Fano $4$-folds}, arXiv:2205.05288v2, to appear in Birational geometry, Kähler--Einstein metrics and Degenerations, Moscow, Shanghai and Pohang, June–November 2019, Springer Proceedings in Mathematics \& Statistics (PROMS, volume 409).

\bibitem{BCHM} C.~Birkar, P.~Cascini, C.~D.~Hacon, J.~M\textsuperscript{c}Kernan, \emph{Existence
of minimal models for varieties of log general type}, J. Amer. Math. Soc. 23 (2010), no. 2, 405--468.









\bibitem{Dol12} I.~V.~Dolgachev, \emph{Classical algebraic geometry: A modern view}, Cambridge University Press, Cambridge, 2012.





\bibitem{Jiang13} C.~Jiang, \emph{Bounding the volumes of singular weak log del Pezzo surfaces}, Internat. J. Math. 24 (2013), no. 13, 1350110, 27 pp.

\bibitem{Jiang18} C.~Jiang, \emph{On birational boundedness of Fano fibrations}, Amer. J. Math. 140 (2018), no. 5, 1253--1276.


\bibitem{Jiang21} C.~Jiang, \emph{Boundedness of anti-canonical volumes of singular log Fano threefolds}, Comm. Anal. Geom. {\bf 29} (2021), no. 7, 1571--1596.

\bibitem{JZ21} C.~Jiang, Y.~Zou, \emph{An effective upper bound for anti-canonical volumes of canonical $\bQ$-Fano three-folds}, Int. Mat. Res. Not. IMRN, rnac100, https://doi.org/10.1093/imrn/rnac100.





\bibitem{KM} J.~Koll\'{a}r, S.~Mori, \emph{Birational geometry of algebraic varieties},
Cambridge tracts in mathematics, 134, Cambridge University
Press, Cambridge, 1998.


\bibitem{Lai16} C.-J.~Lai, \emph{Bounding volumes of singular Fano threefolds}, Nagoya Math. J. 224 (2016), no. 1, 37--73.


\bibitem{Positivity1} R.~Lazarsfeld, \emph{Positivity in algebraic geometry, I, Classical setting: line bundles and linear series}, Results in Mathematics and Related Areas. 3rd Series. A Series of Modern Surveys in Mathematics, 48. Springer-Verlag, Berlin, 2004. 



\bibitem{MP} S.~Mori, Y.~G.~Prokhorov, {\em On $\bQ$-conic bundles}, Publ. Res. Inst. Math. Sci. 44 (2008), no. 2, 315--369.


\bibitem{Prok01}Y.~G.~Prokhorov,
{\em Lectures on complements on log surfaces}. MSJ Memoirs, 10. Mathematical Society of Japan, Tokyo, 2001. 




\bibitem{Prok07}Y.~G.~Prokhorov, {\em The degree of $\bQ$-Fano threefolds}, Mat. Sb. 198 (2007), no. 11, 153--174; translation in Sb. Math. 198 (2007), no. 11-12, 1683--1702.







\bibitem{ZQ06} Q.~Zhang, {\em Rational connectedness of log $\bQ$-Fano varieties}, J. Reine Angew. Math.
{590} (2006), 131--142.



\end{thebibliography}
\end{document}